\newtheorem{theorem}{Theorem}[section]
\newtheorem{proposition}[theorem]{Proposition}
\newtheorem{lemma}[theorem]{Lemma}
\theoremstyle{definition}
\newtheorem{definition}[theorem]{Definition}
\newtheorem{remark}[theorem]{Remark}
\newcommand{\ind}[1]{{\text{\Large $\mathfrak 1$}}\left(#1\right)}
\newcommand{\lrc}[1]{\left\{#1\right\}}
\begin{document}

\title{Active phase for activated random walks on $\mathbb{Z}^d$, $ d \geq 3$, \\
with density less than one and arbitrary sleeping rate }
\author{Lorenzo Taggi\thanks{Technische Universit\"at Darmstadt, Darmstadt, DE; taggi@mathematik.tu-darmstadt.de. Supported by German Research Foundation (DFG).}}
\date{}
\maketitle

\begin{abstract}
 It has been conjectured that the critical density of  the Activated Random Walk model  is strictly less than one for any value of the sleeping rate.
 We prove this conjecture on $\mathbb{Z}^d$ when $d \geq 3$ and, more generally, on graphs where the random walk is transient. Moreover, we establish the occurrence of a phase transition on non-amenable graphs, 
 extending previous results which require that the graph is  amenable or a regular tree.
    \newline
    \newline
    \emph{Keywords and phrases.} Interacting particle systems, Abelian networks, Absorbing-state phase transition,
    self-organized criticality.\\
    MSC 2010 \emph{subject classifications.}
    Primary 82C22; 
    Secondary 60K35, 
              82C26. 
\end{abstract}

\section{Introduction}\label{sec:intro}
The activated random walk model (ARW) is a system
of interacting particles on a graph $G=(V,E)$.
Together with Abelian and Stochastic Sandpiles,
it belongs to a class of systems
which have been introduced 
in order to study a physical phenomenon known as 
self-organized criticality.
Moreover, it can be interpreted
as a toy model for an epidemic spreading, with
infected individuals moving diffusively on a graph.
The model is defined as follows.
Every particle is in one of two states, 
A (active) or S (inactive, sleeping).
Initially, the number of particles at each vertex of $G$ 
is an independent Poisson random variable with mean $\mu\in(0,\infty)$,  usually called the \emph{particle density}, and all particles are of type A. 
Active particles perform an independent, continuous time
random walk on $G$ with jump rate $1$, and
with each jump being to a uniformly random neighbour.
Moreover, every A-particle has a Poisson clock of rate $\lambda>0$
(called  \emph{sleeping rate}).
When the clock of a particle rings,
if the particle does not share
the vertex with other particles,
the particle becomes of type S; otherwise nothing happens.
Each S-particle does not move and remains sleeping until another particle jumps into its location.
At such an instant, the S-particle is activated and turns into type A.

For any value of $\lambda$, a phase transition as a  $\mu$ varies is expected to occur.
When $\mu$ is small, there is a lot of free space between the particles. This allows every particle
to turn into type S eventually and never become active again.
When this happens, we say that ARW \textit{fixates}.
This is not expected to occur when $\mu$ is large, since the active particles will repetitively jump
on top of other particles, activating the ones that had turned into type S.
In this case, we say that ARW is \textit{active}.

In a seminal paper \cite{Rolla}, Rolla and Sidoravicius prove a 0-1 law  (i.e., the process is either active or fixates with probability 1) and a monotonicity property with respect to $\mu$.
This leads to the existence of a critical curve $\mu_c = \mu_c(\lambda)$,
\begin{equation}
   \mu_c=\mu_c\left(\lambda\right)  := \inf\lrc{ \mu \geq 0 \, : \, \mathbb{P}(\text{ARW is active})  > 0   }.
   \label{eq:criticaldensity}
\end{equation}
which is such that, for any $\mu>\mu_c$ the system is almost surely active,
and for any $\mu<\mu_c$ the system fixates almost surely.
Though \cite{Rolla} is restricted to the case of $G$ being $\mathbb{Z}^d$,
the above properties hold for any vertex-transitive graph.
Throughout this paper we always consider that $G$ is an infinite simple graph that is locally-finite and vertex transitive, which ensures the existence of $\mu_c$.

In recent years considerable effort has been made to prove  basic properties of the critical curve $\mu_c = \mu_c(\lambda)$
\cite{Amir, Basu, Rolla, Rolla3, Rolla2, Sidoravicius, Stauffer, Taggi}.
A quite natural bound for this curve 
is  $\mu_c \leq 1$ for any value of $\lambda \in (0,\infty)$
, which was proved 
in  \cite{Amir, Rolla, Shellef}.
Indeed, one does not expect fixation when  the average number of particles per vertex is more than one,
since a particle can be in the S-state only if it is alone on a given vertex
and, for this reason, there is not enough space for all the A-particles to turn to the S-state.
A more challenging question is whether $\mu_c$ is strictly less than one  
 for any value of $\lambda \in (0, \infty)$, which is expected to hold true under wide generality.
 In other words, one expects that,
for any value of $\lambda \in (0,\infty)$, there exists a value of $\mu$ which is strictly less than
one such that, even though there is enough space for all the  particles to  turn into the S-state, 
particle motion prevents this from happening, so the system does not fixate.
This question was asked by Rolla and Sidoravicius in their seminal paper \cite{Rolla}
and appears also in  \cite{Basu, Dickman}.
Such a question received much attention in the last few years \cite{Rolla, Basu, Rolla2, Stauffer, Taggi} but,
despite much effort, a complete answer was provided only in two cases:
on vertex-transitive graphs where the random walk has a positive speed \cite{Stauffer}
and for a simplified model on $\mathbb{Z}^d$ where
the jump distribution of active particles is biased in a fixed direction \cite{Taggi}.
A partial answer which requires the assumption
that $\lambda$ is smaller than a finite constant $\lambda_0 < \infty$
 was also provided in
\cite{Stauffer} when $G$ is vertex-transitive and transient  
and in \cite{Basu} when $G = \mathbb{Z}$.

The first main result of this paper is the next theorem, which provides a positive
answer  to this question for any $\lambda \in (0, \infty)$ on $\mathbb{Z}^d$ , when $d \geq 3$, for the original model where active particles jump uniformly to nearest-neighbours. More generally, our result holds for any vertex-transitive amenable graph 
where the random walk is transient.
As a byproduct of our method, we 
also obtain that 
$\mu_c (\lambda) \rightarrow 0$  as $\lambda \rightarrow 0$
with a better convergence rate than as  in  \cite{Stauffer}.

\begin{theorem}
\label{theo1:transient graph}
If $G$ is vertex-transitive, amenable and  transient, then $$\mu_c(\lambda) < 1~~~~~ \mbox{$\forall \lambda \in (0, \infty)$.}$$ 
Moreover,
$\lim\limits_{\lambda \rightarrow 0} \frac{\mu_c(\lambda)}{\lambda^{\frac{1}{2}}} < \infty$. 
\end{theorem}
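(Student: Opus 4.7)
My plan is to combine the site-wise (Diaconis--Fulton) representation of ARW with a Green's function estimate that exploits transience of the underlying random walk. The central quantity is the odometer $m_V(x)$, the number of instructions consumed at $x$ during stabilization of ARW restricted to a finite subgraph $V \subset G$ with sink at $\partial V$. By the abelian property and the weak-stabilization/monotonicity framework of Rolla--Sidoravicius, non-fixation on the infinite graph would follow if I can show $\mathbb{E}[m_{V_n}(0)] \to \infty$ along a F\o lner exhaustion $V_n \uparrow G$ whenever $\mu > \mu_*(\lambda)$ for a threshold $\mu_*(\lambda) < 1$ that I will have to construct explicitly.

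First I would set up the stabilization on $V_n$ via a stack of i.i.d.\ instructions at each vertex, each instruction being either a uniform neighbor jump (with probability $1/(1+\lambda)$) or a sleep attempt (with probability $\lambda/(1+\lambda)$). Using the abelian property, I would run all particles as independent random walks first; this yields a ``raw'' expected visit count at $x$ on the order of $\mu \sum_y G_n(x,y)$, where $G_n$ is the Green's function for simple random walk on $V_n$ killed at $\partial V_n$. The effect of sleeping is to subtract from this, for each successful sleep event at $y$, a contribution of expected size $G_n(x,y)$; crucially, the total number of successful sleep events across $V_n$ is bounded by $|V_n|$, since each site can hold at most one sleeping particle at any time. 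Transience enters because $G_n(x,x)$ is uniformly bounded, while amenability is used to pass F\o lner-type local averages through these estimates without boundary loss.

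Explicit bookkeeping of the balance between ``raw visits'' and ``sleep budget'' should produce $\mu_*(\lambda) < 1$; the strictness of this inequality is exactly the content of the theorem. To upgrade a diverging-expectation bound to almost sure non-fixation, I would rely on the $0$--$1$ law of Rolla--Sidoravicius together with a second-moment or monotone-coupling estimate. For the $\sqrt{\lambda}$ asymptotic as $\lambda \to 0$, the natural heuristic is that instructions are sleep attempts with probability $\sim \lambda$, and a particle visits $\sim 1/\lambda$ sites before attempting to sleep; balancing the pair-meeting probability (of order $\mu^2/\lambda$ per unit mass of Green's function) against the per-particle sleep budget suggests $\mu_*(\lambda) = O(\sqrt{\lambda})$, which is the scaling to be extracted from the explicit form of the threshold.

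The main obstacle I anticipate is the \emph{large-$\lambda$ regime}, where sleep attempts are abundant and the naive Green's-function-versus-sleep-budget inequality becomes too weak to ensure strictness. The mechanism to exploit here is ``crowding'': when $\mu$ is close to $1$, most sleep attempts land on sites already occupied by another particle and are therefore rejected. Making this quantitative, uniformly in $\lambda$, seems to require a careful joint analysis of the locations of sleep attempts and the instantaneous particle configuration — possibly via conditioning on the set of sleeping particles at each stage of the stabilization and showing that the vacant sites are sparse enough to absorb only a fixed fraction of sleep attempts. Ensuring that $\mu_*(\lambda)$ is bounded away from $1$ for \emph{all} $\lambda \in (0,\infty)$, rather than just for $\lambda$ small as in previous partial results, appears to be the most delicate step of the argument.
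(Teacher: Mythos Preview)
Your setup --- Diaconis--Fulton representation, odometer analysis via the Green's function, F{\o}lner exhaustion for amenability --- matches the paper's framework, and you correctly identify the large-$\lambda$ regime as the crux. But the proposal has a genuine gap precisely where you anticipate it: the ``crowding'' mechanism you sketch is not made concrete, and a direct joint analysis of sleep locations against the instantaneous configuration is essentially the approach that stalled in earlier work, yielding $\mu_c(\lambda)<1$ only for $\lambda$ below a finite threshold. Your raw-visits-minus-sleep-budget balance, as you already note, degenerates for large $\lambda$, and nothing in the proposal fills that hole.

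The paper resolves this by a different device. Rather than bounding $\mathbb{E}[m_{V_n}(0)]$ directly, it bounds the probability $Q(x,K)$ that a fixed site $x$ ends with a sleeping particle after stabilizing $K$, uniformly in $K$ and $x$, by an explicit $g(\lambda)<1$. The key idea is an auxiliary stabilization in which all sleep instructions \emph{at the single vertex $x$} are replaced by neutral instructions; the difference between the resulting ``enforced'' jump-odometer at $x$ and the weak-stabilization jump-odometer at $x$ defines a random variable $A_{(x,K)}$ that is \emph{independent of the sleep instructions at $x$}. This manufactured independence lets one factor the event $\{\eta'(x)=\rho,\ T_{(x,K)}=\ell\}$ into a geometric-in-$\lambda$ term and a $\lambda$-free tail $\mathcal{P}(A_{(x,K)}\ge \ell-2)$, and a one-extra-particle coupling gives $\boldsymbol{E}[A_{(x,K)}]\le G_{K^c}(x,x)\le G(0,0)$ uniformly in $\lambda$. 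Summing over $\ell$ with a cutoff $H$ and applying Markov's inequality yields
\[
Q(x,K)\ \le\ 1-\Bigl(1-\tfrac{G(0,0)}{H+1}\Bigr)\,(1+\lambda)^{-H}\ <\ 1\quad\text{for every }\lambda.
\]
The $\sqrt{\lambda}$ asymptotic comes from optimizing this bound over $H$ (taking $H\sim\sqrt{G(0,0)/\log(1+\lambda)}$), not from a pair-meeting heuristic. The step from $Q\le g(\lambda)$ to $\mu_c\le g(\lambda)$ then uses mass balance and the Rolla--Tournier criterion: if $\mu>g(\lambda)$ a positive density of particles exits every finite set, which on an amenable graph forces activity.
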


A second basic question concerning the behaviour of the critical curve $\mu_c=\mu_c(\lambda)$ is whether its value is  positive.
A positive answer has been proved by Sidoravicius and Teixeira in \cite{Sidoravicius} when $G= \mathbb{Z}^d$ by means of renormalization techniques. A shorter proof
was also provided  by  Stauffer and Taggi in \cite{Stauffer} when $G$ is amenable and vertex-transitive
and when $G$ is a regular tree.
The proofs of \cite{Sidoravicius, Stauffer} crucially rely on the amenability 
property of the graph or on the assumption that $G$ is a regular tree.

Our second main theorem provides a positive answer to this question 
on vertex-transitive graphs that are non-amenable, establishing the occurrence of
a phase transition for this class of  graphs and extending the previous results \cite{Sidoravicius, Stauffer}.
Moreover, we also obtain that $\lim_{\lambda \rightarrow \infty} \mu_c(\lambda)=1$.
\begin{theorem}
\label{theo: non amenable}
If $G$ is vertex-transitive and non-amenable, then $\mu_c(\lambda) > 0$ for any value of  $\lambda \in (0,\infty)$. More specifically,
$$
\mu_c(\lambda) \geq \frac{\lambda}{1 + \lambda} ~~~~~ \mbox{$\forall \lambda \in (0, \infty)$.}
$$
 \end{theorem}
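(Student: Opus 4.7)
The plan is to prove that $\mu < p$, where $p := \lambda/(1+\lambda)$, implies almost sure fixation, which is the desired lower bound on $\mu_c(\lambda)$.  I will work in the Diaconis--Fulton site-wise representation of Rolla and Sidoravicius: each vertex $x \in G$ carries an i.i.d.\ stack of instructions, each independently a ``sleep'' instruction (probability $p$) or a ``jump to a uniformly chosen neighbour'' (probability $1-p = 1/(1+\lambda)$).  By the abelian property I may choose any legal toppling order, so it suffices to bound the expected odometer $\mathbb{E}[u_{V_n}(x)]$ uniformly in $n$ along a finite exhaustion $V_n \uparrow G$ with absorbing boundary; monotone convergence $u_{V_n}\uparrow u_G$ then yields fixation on $G$.

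Process the particles one at a time as follows: each active particle performs a \emph{solo walk}, popping instructions at its current vertex, moving under jumps, and attempting to sleep at sleep instructions; if its walk enters a vertex already hosting a sleeping particle, reactivate and defer that sleeper and continue the current walk.  Under a careful ordering ensuring the walker is alone whenever it tries to sleep, each solo walk terminates at its first popped sleep instruction, so the number of pops is geometric with parameter $p$ (mean $1/p$) and between pops the trajectory is simple random walk on $G$.  Starting at $x$, the expected number of visits of a single walk to a vertex $y$ equals the killed Green function $\sum_{k\geq 0}(1-p)^k p_k(x,y)$, and summing over $y$ collapses to $\sum_k (1-p)^k = 1/p$.

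Now use a branching-process estimate: by mass conservation, the expected number of sleepers present at any moment is at most the total expected mass $\mu|V_n|$, so by vertex transitivity the probability that a given vertex carries a sleeper at any fixed moment is at most $\mu$; consequently one walk triggers at most $\mu/p$ reactivations in expectation, which is the offspring mean.  Let $W$ and $A$ be the expected total numbers of walks and of reactivations.  The recurrence $A\leq (\mu|V_n|+A)\cdot \mu/p$ gives, under $\mu<p$,
\begin{equation*}
\mathbb{E}\Bigl[\sum_{x\in V_n} u_{V_n}(x)\Bigr] \;=\; W/p \;\leq\; \frac{\mu|V_n|}{p-\mu},
\end{equation*}
so by transitivity $\mathbb{E}[u_{V_n}(x)] \leq \mu/(p-\mu)$ uniformly in $n$, which implies fixation.

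The main obstacle is making the offspring-mean bound $\mu/p$ rigorous despite two subtleties: (i) distinct walks share the same instruction stacks at vertices they both visit, so the walks are correlated, and (ii) the conditional distribution of sleepers at the moment a walker visits a vertex is not literally the initial density, so the pointwise bound $\mu$ on the sleeper density must be propagated through the generations.  The natural tool is to combine the abelian flexibility in choosing the toppling order with the mass-transport principle on the vertex-transitive graph $G$ to control these correlations; non-amenability provides the positive-speed dispersion of the random walks that gives the extra room making the argument go through where the previously available amenable-graph and regular-tree techniques needed renormalization.
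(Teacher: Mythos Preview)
Your proposal is not a proof but a heuristic outline, and the two ``subtleties'' you flag in the final paragraph are not minor technicalities but the entire difficulty; the closing sentence invoking mass transport and positive speed does not explain how either tool closes either gap.

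Concretely: (i) The claim that a solo walk terminates at its first popped sleep instruction, hence pops a geometric$(p)$ number of instructions, is valid only if the walker is alone at every site it visits. But the mechanism you describe---``reactivate and defer'' a sleeper---does not make the walker alone: in the Diaconis--Fulton representation you cannot remove the reactivated particle from the site without toppling it, and toppling it consumes instructions from the very stack the current walker is using. If instead you leave the reactivated particle in place and keep toppling, then a sleep instruction used at that site is wasted (two particles present), and the walk length is no longer geometric$(p)$. (ii) The offspring-mean bound $\mu/p$ requires that each site the walk visits hosts a sleeper with probability at most $\mu$, \emph{conditionally on the walk's trajectory and on the entire history so far}. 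Mass conservation only gives $\mathbb{E}[\#\text{sleepers}]\le \mu|V_n|$ unconditionally; the time the walker arrives at a site is an adaptive stopping time, and stabilization of a finite box is not translation invariant (boundary effects), so neither transitivity nor mass transport yields the pointwise conditional bound you need. The recurrence $A \le (\mu|V_n|+A)\,\mu/p$ is therefore unjustified.

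Finally, your argument as written never uses non-amenability in any concrete step; if the branching heuristic could be made rigorous as stated it would apply to every vertex-transitive graph. The paper's proof is entirely different: from the identity
\[
\mathbb{E}\big[M_{B_L}(0)\big] \;=\; \sum_{y\in B_L} G_{B_L^c}(y,0)\,\big(\mu - Q(y,B_L)\big)\;\ge\;0
\]
and the lower bound $Q(y,B_L)\ge \tfrac{\lambda}{1+\lambda}\,\mathcal{P}\big(m_{B_{\delta L}}(0)\ge 1\big)$ for $y\in B_{(1-\delta)L}$, one gets a contradiction under activity and $\mu<\tfrac{\lambda}{1+\lambda}$ \emph{provided} the boundary-layer sum $\sum_{y\in B_L\setminus B_{(1-\delta)L}} G_{B_L^c}(y,0)$ is negligible relative to the bulk sum. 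That comparison is precisely where non-amenability enters, via the positive speed of the random walk (Lemma~\ref{lemma:RWboundary}).
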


 Theorem \ref{theo: non amenable} and the results of \cite{Sidoravicius, Stauffer}
imply that  $\mu_c(\lambda) >0$ for any $\lambda \in (0, \infty)$ 
 and that $\lim_{\lambda \rightarrow \infty} \mu_c(\lambda) = 1$
 on any vertex transitive graph.
Moreover, Theorem \ref{theo1:transient graph} and the results of \cite{Stauffer}
imply that
$\mu_c(\lambda) < 1$ for any $\lambda \in (0, \infty)$ 
and that $\lim_{\lambda \rightarrow 0} \mu_c(\lambda) = 0$
on any vertex-transitive  graph where the random walk is transient.

\paragraph{Description of the proofs}
Our proofs are simple and rely on 
a graphical representation,
which is called  \textit{Diaconis-Fulton} and has been introduced in \cite{Rolla}, 
and on  \textit{weak stabilization},
a procedure that has been introduced in \cite{Stauffer}
which consists of using the random instructions
of such a representation by following  a certain strategy.

A fundamental quantity for the mathematical analysis
of the activated random walks is  the number of times $m_{B_L}$
the origin is visited by a particle when the dynamics take place in a finite
ball of radius $L$, $B_L$,  with particles being absorbed whenever they leave $B_L$.
As it was proved in \cite{Rolla}, activity for ARW is equivalent
to the limit $L \rightarrow \infty$ of this quantity being infinite almost surely. 
A quantity that plays a central role in this paper
is the probability $Q(x,B_L)$ that an S-particle is at a vertex $x \in B_L$ when $B_L$ becomes stable.
This quantity is important since the values  $\{ \, Q(x,B_L) \, \}_{x \in B_L}$ are related 
to the expectation of $m_{B_L}$ by mass-conservation arguments.
Thus,  one can deduce whether the system is active by estimating these values.

The proof of Theorem \ref{theo1:transient graph} consists of bounding away from one
the probabilities $\{ Q(x,B_L)\}_{x \in B_L}$ for any  $\lambda \in (0, \infty)$ uniformly in $L$ and in $x \in B_L$.
This improves the upper bound that was provided in \cite{Stauffer},
where the probabilities  $\{ Q(x,B_L)\}_{x \in B_L}$ were bounded away from one
only for  $\lambda$ small enough.
Such an enhancement is obtained by introducing
a  stabilization procedure that allows to recover independence
from sleep instructions at one vertex. This gained independence and the fact that we
do not count the total number of instructions but only jump instructions, 
allows to obtain an additional factor
in the upper bound for $Q(x,B_L)$ which prevents this  bound
from exploding  when $\lambda$ is
infinitely large.
Our upper bound on $Q(x,B_L)$  implies that for any $\lambda \in (0, \infty)$
 one can find  $\epsilon>0$  and set the value of $\mu$ such that   $ 1 > \mu \geq   
Q(x,B_L ) + \epsilon$ for all $L$ and $x \in B_L$. 
This implies that a positive density $\epsilon$  of particles eventually leaves $B_L$
and, as it was proved in \cite{Rolla2}, that the system is active,
proving  Theorem \ref{theo1:transient graph}.

   Theorem \ref{theo: non amenable} extends to non-amenable graphs 
   the analogous result that was proved in \cite{Stauffer} for amenable graphs. 
   The idea of the proof that is presented in \cite{Stauffer} is that one assumes activity and uses this 
   assumption and the 
   weak stabilization procedure to show that  for any $\epsilon>0$,  
   there exists a large enough constant  $r_0=r_0(\epsilon)$ such that,
   for any large enough $L$ and for any vertex $x \in   B_L$
   which has a distance at least $r_0$ from the boundary of $B_L$,
     \begin{equation}\label{eq:intro}
   Q(x,B_L) \geq \frac{\lambda}{1+\lambda}- \epsilon.
   \end{equation}
   This leads to the conclusion that the particle density after the stabilization of $B_L$
   is at least $  \frac{\lambda}{1+\lambda}$.
   The amenability assumption is crucial here, since the number of particles which start `close' to 
   the boundary, for which (\ref{eq:intro}) does not hold,
    can be neglected  only if the graph is amenable (i.e. their number is of order $o(|B_L|)$).
   Since the initial particle density is $\mu$ and since the particle density cannot increase,
   we conclude that $\mu \geq \frac{\lambda}{1+\lambda}$.
   Since this is a consequence of  activity, we obtain that  $\mu_c \geq \frac{\lambda}{1+\lambda}$.


    In this paper, we use a different strategy that allows us to extend this result to non-amenable graphs.
    By assuming that the system is active and by using (\ref{eq:intro}),
    one obtains that the particle density in a small ball 
    $B_{(1-\delta)L} \subset B_L$ after the stabilization of the larger ball $B_L$ is at least
    $ \frac{\lambda}{1+\lambda}$, for some $\delta>0$ and all $L$ large enough.
    Thus, if we set $\mu < \frac{\lambda}{1+\lambda}$, this means that the particle density
    inside the smaller ball must have increased during the stabilization of the larger ball. Due to the conservation law,
    the only way  this might have happened
     is if a large number of particles which started from $B_L \setminus B_{(1-\delta) L}$
    turns into the S-state for the last time in  $B_{(1-\delta) L}$.
    We show that, if the graph is non-amenable,  this cannot happen simply because, even though the number of the boundary
    particles is not negligible if compared to $|B_{(1-\delta) L}|$, the  bias towards the outside of the ball allows only a
     few of them to penetrate inside the ball. So, the particle density in the smaller ball cannot increase
    and this leads to the conclusion that $\mu \geq \frac{\lambda}{1+\lambda}$. Since this is a consequence of activity, 
    we obtain that  $\mu_c \geq \frac{\lambda}{1+\lambda}$.

\vspace{0.2cm} 
The remaining part of the paper is organized as follows.
In Section \ref{sec:Diaconis} we introduce the Diaconis-Fulton representation following
 \cite{Rolla}, we recall the notion of weak stabilization 
  following \cite{Stauffer} and we fix the notation.
In Section \ref{sec:enforced stabilization} we provide an explicit upper bound for $Q(x,B_L)$, which is presented
in Theorem \ref{theo:boundsQ},
and we prove  Theorem \ref{theo1:transient graph}.
Finally, Section \ref{sec: Fixation on non-amenable graphs} is dedicated to the proof of Theorem \ref{theo: non amenable}.

\section{Diaconis-Fulton representation and weak stabilization}
\label{sec:Diaconis}
In this section we describe the
Diaconis-Fulton graphical representation
for the dynamics of ARW, following~\cite{Rolla},
and we recall the notion of \textit{weak stabilization}, following  \cite{Stauffer}.
Before starting, we fix the notation.

\textbf{Notation} A graph is denoted by $G=(V,E)$ and is always assumed to be simple, infinite and locally-finite.
The simple random walk measure is denoted by $P_x$, where $x$ is the starting vertex of the random walk.
The expectation with respect to $P_x$ is denoted by $E_x$.
For any set $Z \subset V$  and any pair of vertices $x, y \in V$, we let 
$$
G_{Z}(x,y) =  {E}_x \big ( \, \sum\limits_{t=0}^{\tau_Z-1} \mathbbm{1} \{  \, X(t) = y  \, \} \, \big)
$$
be the expected number of times a discrete time random walk $X(t)$ starting from $x$ hits $y$ before reaching $Z$ (Green's function), where $\tau_Z$ is the hitting time of the set $Z$.
If $Z = \emptyset$, then we set  $\tau_Z = \infty$ and we simply write $G(x,y)$.
We also denote by $\tau^+_Z$ the return time to $Z$.
The origin of the graph will be denoted by $0 \in V$.
We let $B_r = \{ y \in V \, \, : \, \, d(0,y) < r\}$ be the ball 
of radius $r>0$ centred at the origin, where $d( \cdot, \cdot )$ is the graph distance,
and we let $B_r(x) = B_r + x$ be the ball of radius $r$ which is centred at $x \in V$.

\paragraph{Diaconis-Fulton representation} 
For a graph $G=(V,E)$, the state of configurations is $\Omega=\{0,\rho,1,2,3,\ldots\}^V$, where a vertex being in state $\rho$ denotes that the vertex has 
one S-particle, while being in state $i\in\{0,1,2,\ldots\}$ denotes that the vertex contains $i$ A-particles.
We employ the following order on the states of a vertex: $0 < \rho < 1<2<\cdots$.
In a configuration $\eta\in \Omega$,
a vertex $x \in V$ is called \textit{stable} if
$\eta(x) \in \{0, \rho \}$,
and it is called \textit{unstable} if $\eta(x) \geq 1$.
We fix an array of  \textit{instructions} 
$\tau = ( \tau^{x,j}: \, x \in V, \, j \in \mathbb{N})$
(in this paper we assume that  $\mathbb{N}$ is the set of strictly positive integers),
where $\tau^{x,j}$ can either be of the form $\tau_{xy}$
or $\tau_{x\rho}$. We let $\tau_{xy}$ with $x,y\in V$ denote the instruction that a particle from $x$ jumps to vertex $y$, and $\tau_{x\rho}$ denote
the instruction that a particle from $x$ falls asleep.
Henceforth we call $\tau_{xy}$ a \emph{jump instruction} and $\tau_{x\rho}$ a \emph{sleep instruction}.
Therefore, given any configuration $\eta$, performing the instruction $\tau_{xy}$ in $\eta$ yields another configuration $\eta'$ such that 
$\eta'(z)=\eta(z)$ for all $z\in V\setminus\{x,y\}$, $\eta'(x)=\eta(x)-\ind{\eta(x)\geq 1}$, and $\eta'(y)=\eta(y)+\ind{\eta(x)\geq 1}$. We use the convention that $1+\rho=2$.
Similarly, performing the instruction $\tau_{x\rho}$ to $\eta$ yields a configuration $\eta'$ such that 
$\eta'(z)=\eta(z)$ for all $z\in V\setminus\{x\}$, and if $\eta(x)=1$ we have $\eta'(x)=\rho$, otherwise $\eta'(x)=\eta(x)$.

Let $h = ( h(x)\, : \,  x \in V)$ count the number of 
instructions used at each vertex.
We say that we \textit{use} an instruction
at $x$ (or that we \emph{topple} $x$) when we act on the current
particle configuration $\eta$ through the operator $\Phi_x$,
which is defined as,
\begin{equation}
\label{eq:Phioperator}
\Phi_x ( \eta, h) =
( \tau^{x, h(x) + 1}  \, \eta, \, h + \delta_x),
\end{equation}
where $\delta_x(y)=1$ if $y=x$ and $\delta_x(y)=0$  otherwise.
The operation $\Phi_x$ is \textit{legal} for $\eta$ if $x$ is unstable in $\eta$,
otherwise it is \textit{illegal}.

\vspace{0.8cm}

\noindent {\textbf{Properties}}
We now describe the properties of this representation.
Later  we discuss how they are related to the  stochastic dynamics of ARW.
For a sequence of vertices $\alpha = ( x_1, x_2, \ldots x_k)$,
we write $\Phi_{\alpha} = \Phi_{x_k} \Phi_{x_{k-1}}
\ldots \Phi_{x_1}$ and we say that $\Phi_{\alpha}$ is
\textit{legal} for $\eta$ if $\Phi_{x_\ell}$
is legal for $\Phi_{(x_{\ell-1}, \ldots, x_1)} (\eta,h) $
for all $\ell \in \{ 1, 2, \ldots k \}$.
Let $m_{\alpha} = ( m_{\alpha}(x) \, : \,x \in  V )$
be given by,
 $m_{\alpha}(x) \, = \, \sum_{\ell} \ind{x_\ell = x},$
the number of times the vertex $x$ appears in $\alpha$.
We write $m_{\alpha} \geq m_{\beta}$ if
$m_{\alpha} (x)  \,  \geq \, m_{\beta} (x) \, \, \, \forall x \in V$.
Analogously we write $\eta'   \geq   \eta$ if $\eta' (x) \, \geq \, \eta(x)$
for all $x \in V$. We also write $(\eta', h') \geq (\eta, h)$
if $\eta' \geq \eta$ and $h' = h$.

Let $\eta, \eta'$ be two configurations, $x$ be a vertex in $V$
and  $\tau$ be a realization
of the array of instructions. 
Let $V'$ be a finite subset of $V$. A configuration $\eta$ is said to be \textit{stable} in $V'$
if all the vertices $x \in V'$ are stable. We say that $\alpha$ is contained in $V'$
if all its elements are in $V'$, and we say that $\alpha$ \textit{stabilizes} $\eta$ in $V'$
if every $x \in V'$ is stable in $\Phi_\alpha \eta$.
The following lemmas give fundamental properties of the Diaconis-Fulton representation. 
For the proof, we refer to \cite{Rolla}. 

\begin{lemma}[Abelian Property]\label{prop:lemma2}
   Given any $V'\subset V$,
   if $\alpha$ and $\beta$ are both legal sequences for $\eta$
   that are contained in $V'$ and stabilize $\eta$ in $V'$, 
   then $m_{\alpha} = m_{\beta}$. In particular, $\Phi_{\alpha} \eta = \Phi_{\beta} \eta$.
\end{lemma}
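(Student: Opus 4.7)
The plan is to establish the classical \emph{least action principle}: whenever $\alpha$ is a legal sequence for $\eta$ contained in $V'$ and $\beta$ is a legal stabilizing sequence for $\eta$ in $V'$, one has $m_\alpha \leq m_\beta$ coordinatewise. Exchanging the roles of $\alpha$ and $\beta$ (both stabilizing by hypothesis) then yields $m_\alpha = m_\beta$, and the equality $\Phi_\alpha \eta = \Phi_\beta \eta$ will drop out of the same swap argument used to prove the principle.

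Two preliminary facts do the work. First, a \emph{local commutation} lemma: for distinct vertices $x, y$ both unstable in $\eta$, the operators $\Phi_x \Phi_y$ and $\Phi_y \Phi_x$ applied to $(\eta, h)$ are both legal and yield the same pair. Since $\Phi_z$ only alters the values at $z$ and at the vertex specified by $\tau^{z, h(z)+1}$, this reduces to a short case analysis according to whether each instruction is a jump or a sleep; the only subtle case is a sleep competing with an incoming jump, where one uses the convention $1 + \rho = 2$ to verify agreement in both orderings. Second, a \emph{monotonicity} lemma: if $\alpha$ is legal for $\eta$ and $\eta' \geq \eta$, then $\alpha$ is legal for $\eta'$ and $\Phi_\alpha \eta' \geq \Phi_\alpha \eta$; this follows by immediate induction on $|\alpha|$, since adding particles cannot stabilize an already unstable vertex.

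With these in hand I would prove the least action principle by induction on $|\alpha|$. The base $|\alpha|=0$ is trivial. For the inductive step, write $\alpha = (x_1, \ldots, x_n)$ and set $x = x_1$; since $x$ is unstable in $\eta$ and toppling any $y \neq x$ cannot decrease $\eta(x)$ (by monotonicity), while $\beta$ is stabilizing and $x \in V'$, the sequence $\beta$ must contain at least one occurrence of $x$. Let $k$ be its first occurrence. Applying commutation repeatedly to swap this $x$ past the earlier (necessarily distinct) entries $y_1, \ldots, y_{k-1}$ produces a reordering $\tilde\beta$ of $\beta$ that begins with $x$, is still legal and stabilizing, and satisfies $m_{\tilde\beta} = m_\beta$ and $\Phi_{\tilde\beta}\eta = \Phi_\beta \eta$. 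The tail of $\tilde\beta$ is then a legal stabilizing sequence for $\Phi_x \eta$ in $V'$, and the induction hypothesis applied to $(x_2, \ldots, x_n)$ against this tail gives $m_\alpha - \delta_x \leq m_\beta - \delta_x$, i.e.\ $m_\alpha \leq m_\beta$.

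Once the least action principle yields $m_\alpha = m_\beta$ in the two-stabilizing case, I would deduce $\Phi_\alpha \eta = \Phi_\beta \eta$ by the same swap scheme combined with an induction on total length: after rearranging $\beta$ to $\tilde\beta$ starting with $x_1$, both $(x_2, \ldots, x_n)$ and the tail of $\tilde\beta$ are legal stabilizing sequences for $\Phi_{x_1} \eta$ with the same toppling counts, so the recursion closes. The main obstacle I anticipate is the commutation lemma in the presence of sleep instructions, specifically the case of a sleep at value $1$ racing with an incoming jump into the sleeping vertex. Once that case check is done, the rest of the argument is standard abelian-property bookkeeping.
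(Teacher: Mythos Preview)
Your argument is correct and is the standard route to the Abelian property for abelian networks with sleep instructions: local commutation of two legal topplings at distinct unstable sites (with the case check for a sleep at a singly-occupied site racing an incoming jump handled via $1+\rho=2$), the least-action inequality by induction on $|\alpha|$ using the swap of the first occurrence of $x_1$ in $\beta$ to the front, and then equality of the resulting configurations by the same swap-and-recurse scheme once $m_\alpha=m_\beta$ is known. I see no gap; the only point worth being explicit about when you write it up is that after each adjacent swap the intermediate sequence remains legal because $x$ stays unstable under topplings at other sites and the commuted pair leaves the subsequent $(\eta,h)$ unchanged, which you already note.

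As for comparison with the paper: the paper does \emph{not} prove this lemma. Immediately after stating it (together with the monotonicity lemma), the text reads ``For the proof, we refer to \cite{Rolla}.'' The argument in \cite{Rolla} establishes the least action principle in a slightly stronger form (allowing $\beta$ to be merely stabilizing rather than legal), via a direct first-violation argument on toppling counts rather than your commutation-based induction, but for the Abelian property as stated here either approach works and yours is entirely adequate.
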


For any subset $V'\subset V$, any $x\in V$, any particle configuration $\eta$, and any array of instructions $\tau$, we denote by $m_{V^{\prime},\eta,\tau}(x)$ the number of times that $x$ is toppled in the stabilization of $V'$ starting from configuration $\eta$ and using the instructions in $\tau$. Note that by Lemma~\ref{prop:lemma2}, we have that $m_{V^{\prime},\eta,\tau}$ is well defined.
\begin{lemma}[Monotonicity]\label{prop:lemma3}
   If $V' \subset V''\subset V$ and $\eta \leq \eta'$, then $m_{V', \eta, \tau} \leq m_{V'', \eta', \tau}$.
\end{lemma}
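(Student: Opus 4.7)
The plan is to reduce the comparison $m_{V',\eta,\tau} \leq m_{V'',\eta',\tau}$ to the Abelian property (Lemma~\ref{prop:lemma2}) by exhibiting, on the $(V'',\eta')$ side, a legal stabilizing sequence whose vertex counts dominate those of a legal stabilizing sequence on the $(V',\eta)$ side. The key auxiliary fact I would establish first is a monotonicity property of the toppling operators: if $\alpha$ is legal for $(\eta,h)$ and $\eta' \geq \eta$, then $\alpha$ is also legal for $(\eta',h)$, and $\Phi_\alpha \eta' \geq \Phi_\alpha \eta$ (with identical odometer increment $m_\alpha$ on both sides). I would prove this by induction on the length of $\alpha$. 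The inductive step reduces to monotonicity of a single-site operator $\Phi_x$ in its first argument: a vertex is unstable iff it is in state $\geq 1$ in the order $0 < \rho < 1 < 2 < \cdots$, which is clearly preserved under increasing the configuration, so legality carries over; and then one checks by a short case analysis on whether $\tau^{x,h(x)+1}$ is a jump or a sleep instruction that applying the same instruction to a larger configuration yields a larger configuration at both $x$ and (for jumps) its target.

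Given this sub-lemma, I would pick any legal sequence $\alpha$ contained in $V'$ that stabilizes $\eta$ in $V'$, so that by Lemma~\ref{prop:lemma2} we have $m_\alpha = m_{V',\eta,\tau}$. By the sub-lemma, $\alpha$ is legal for $(\eta',h)$ as well, and it is contained in $V''$ since $V' \subset V''$. The resulting configuration $\Phi_\alpha \eta'$ need not be stable in $V''$, but by iteratively toppling unstable vertices in $V''$ we can extend $\alpha$ to a legal sequence $\alpha\beta$, contained in $V''$, that stabilizes $\eta'$ in $V''$; termination of this extension procedure is precisely the well-definedness of $m_{V'',\eta',\tau}$. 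A second application of Lemma~\ref{prop:lemma2} then gives
\[
m_{V'',\eta',\tau} \;=\; m_{\alpha\beta} \;=\; m_\alpha + m_\beta \;\geq\; m_\alpha \;=\; m_{V',\eta,\tau},
\]
which is the claimed pointwise inequality.

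The main obstacle is a careful verification of the monotonicity sub-lemma because of the asymmetric behaviour of the sleep state $\rho$: a sleep instruction at $x$ sends state $1$ to $\rho$ but leaves any state $\geq 2$ untouched, so when $\eta(x) = 1$ and $\eta'(x) \geq 2$ the two configurations evolve differently under the same instruction, and one must check that the relation $\rho < \eta'(x)$ survives. Once this case and the (easier) cases for jump instructions and for $\eta(x) = \eta'(x)$ are handled, the induction is routine and the rest of the argument follows directly from the Abelian property.
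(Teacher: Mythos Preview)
Your proposal is correct and follows the standard route: establish single-step monotonicity of the toppling operators (legality and configuration order are both preserved when passing from $\eta$ to $\eta'\geq\eta$), lift this to sequences by induction, then feed a legal stabilizing sequence for $(V',\eta)$ into $(V'',\eta')$, extend it, and invoke the Abelian property. The case analysis you outline for the sleep instruction is exactly the delicate point, and your treatment of it is right.

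The paper itself does not give a proof of this lemma; it simply refers to \cite{Rolla}. The argument in \cite{Rolla} proceeds via the Least Action Principle (any legal sequence contained in $V''$ has odometer dominated by that of any stabilizing sequence), which is logically equivalent to what you do but packaged slightly differently: there one does not need to explicitly extend $\alpha$ to a full stabilizing sequence $\alpha\beta$ and re-invoke Abelianness, since the Least Action Principle already gives $m_\alpha \leq m_{V'',\eta',\tau}$ directly once $\alpha$ is shown to be legal for $\eta'$. Your version is equally valid; the only place to be slightly careful is the sentence ``termination of this extension procedure is precisely the well-definedness of $m_{V'',\eta',\tau}$'', which tacitly uses that any legal sequence in a finite region can be continued to a stabilizing one --- this is true, but it is itself a consequence of the Least Action Principle rather than of the Abelian property as stated in Lemma~\ref{prop:lemma2}.
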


By monotonicity, given any growing sequence of subsets $V_1\subseteq V_2 \subseteq V_3\subseteq \cdots \subseteq V$ such that $\lim_{m\to\infty} V_m=V$, 
the limit 
$$
   m_{\eta, \tau} = \lim\limits_{m\to \infty} m_{V_m, \eta, \tau},
$$ 
exists and does not depend
on the particular sequence $\{V_m\}_m$.

We now introduce a probability measure on the space of instructions and of particle configurations.
We denote by $\mathcal{P}$ the probability measure according to which,
for any $x \in V$ and any $j \in \mathbb{N}$,
$\mathcal{P} (  \tau^{x,j} = \tau_{x\rho}   ) = \frac{\lambda}{1 + \lambda}$ and 
$\mathcal{P} (  \tau^{x,j} = \tau_{xy}   ) = \frac{1}{d(1 + \lambda)}$ for any $y\in V$ neighboring $x$,
where $d$ is the degree of each vertex of $G$ and the $\tau^{x,j}$ are independent across diffent values of $x$ or $j$.
Finally, we denote by $\mathcal{P}^\nu=\mathcal{P}\otimes \nu$ the joint law of
$\eta$ and $\tau$, where $\nu$ is a distribution on $\Omega$ giving the law of $\eta$.
Let $\mathbb{P}^\nu$ denotes the probability measure induced by the ARW process when the initial distribution of particles is given by $\nu$. 
We shall often omit the dependence on $\nu$ by writing $\mathcal{P}$ and $\mathbb{P}$ instead of $\mathcal{P}^\nu$ and $\mathbb{P}^\nu$.
The following lemma relates the dynamics of ARW to the stability property of the representation.
\begin{lemma}[0-1 law]
   \label{prop:lemma4}
   Let $\nu$ be a translation-invariant, ergodic distribution with finite density.
   Let $x\in V$ be any given vertex of $G$.
   Then $\mathbb{P}^{\nu}  (\text{ARW fixates} ) = \mathcal{P}^{\nu} ( m_{\eta, \tau} (x) < \infty ) \in \{0, 1 \}$.
\end{lemma}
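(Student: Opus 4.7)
The plan is twofold: I would first identify $\{\text{ARW fixates}\}$ with $\{m_{\eta,\tau}(x)<\infty\}$ via an explicit coupling of the continuous-time dynamics with the Diaconis--Fulton representation, and then establish a $0$-$1$ law for the latter event by combining ergodicity of $\mathcal{P}^\nu$ with a propagation argument showing that $m_{\eta,\tau}(\cdot)$ is, $\mathcal{P}^\nu$-almost surely, either finite at every vertex or infinite at every vertex.

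For the coupling, each A-particle carries independent jump and sleep Poisson clocks of rates $1$ and $\lambda$; pooling all clocks attached to particles currently at $x$ produces a stream of ``events'' at $x$, each of which is independently a jump to a uniformly chosen neighbour (probability $\frac{1}{1+\lambda}$) or a sleep attempt (probability $\frac{\lambda}{1+\lambda}$). By exchangeability of A-particles and memorylessness of their clocks, these labels can be drawn in advance as the i.i.d.\ sequence $(\tau^{x,j})_{j\geq 1}$ under $\mathcal{P}$. Since no clocks fire at $x$ when $\eta(x)\in\{0,\rho\}$, every event at $x$ is a legal toppling; by the Abelian Property (Lemma~\ref{prop:lemma2}) combined with monotonicity (Lemma~\ref{prop:lemma3}), the total number of continuous-time topplings at $x$ equals $\lim_m m_{V_m,\eta,\tau}(x)=m_{\eta,\tau}(x)$. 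Therefore $\{\text{ARW fixates}\}$ coincides with $F:=\{m_{\eta,\tau}(y)<\infty\text{ for all }y\in V\}$.

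For the $0$-$1$ law, $\mathcal{P}^\nu$ is invariant and ergodic under the diagonal shift on the product of configuration and instruction spaces, since $\nu$ is translation invariant and ergodic and the instructions form an i.i.d.\ (hence ergodic) field independent of $\eta$. The event $F$ is shift-invariant and so has measure in $\{0,1\}$. It remains to prove that $\{m_{\eta,\tau}(x)<\infty\}=F$ up to $\mathcal{P}^\nu$-null sets for every fixed $x$, i.e.\ that infinite topplings at one vertex force infinite topplings everywhere.

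This propagation step is the main obstacle. Suppose $m_{\eta,\tau}(y)=\infty$ and fix any neighbour $z$ of $y$. For an exhaustion $V_m\nearrow V$ with $y,z\in V_m$, set $N_m:=m_{V_m,\eta,\tau}(y)$, which tends to $\infty$ by Lemma~\ref{prop:lemma3}. The stabilisation of $V_m$ uses exactly the first $N_m$ instructions at $y$, and the strong law of large numbers applied to the i.i.d.\ sequence $(\tau^{y,j})_{j}$ forces a $\frac{1}{d(1+\lambda)}$-fraction of them to be $\tau_{yz}$, each depositing one particle at $z$. Since the stabilised configuration can hold at most one (sleeping) particle at $z$, essentially all of these arrivals must be removed by jump topplings at $z$, giving $m_{V_m,\eta,\tau}(z)\gtrsim\tfrac{1}{d}\,N_m$. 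Taking $V_m\nearrow V$ yields $m_{\eta,\tau}(z)=\infty$; iterating along a finite path in the connected graph $G$ propagates this to every vertex, showing $\{m_{\eta,\tau}(x)<\infty\}=F$ and completing the proof.
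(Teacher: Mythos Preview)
The paper does not give its own proof of this lemma: it is one of the ``fundamental properties of the Diaconis--Fulton representation'' for which the paper explicitly refers the reader to \cite{Rolla}. So there is no in-paper argument to compare against; the relevant benchmark is the Rolla--Sidoravicius proof.

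Your outline follows precisely that standard route: couple the continuous-time process with the site-wise instruction stacks, identify fixation with finiteness of the odometer, invoke ergodicity of $\mathcal{P}^\nu$ for the shift-invariant event, and use a mass-balance propagation argument to pass from ``finite at every vertex'' to ``finite at one vertex''. The propagation step is correct in spirit: if $m_{V_m,\eta,\tau}(y)\to\infty$, the law of large numbers on the stack at $y$ sends roughly a $\tfrac{1}{d(1+\lambda)}$-fraction of topplings as particles to $z$, and since the stable configuration on $V_m$ holds at most one particle at $z$, the number of \emph{jump} instructions used at $z$ (again an asymptotic $\tfrac{1}{1+\lambda}$-fraction of $m_{V_m,\eta,\tau}(z)$) must absorb nearly all of them, forcing $m_{V_m,\eta,\tau}(z)\to\infty$. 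Two points where your sketch is looser than the cited proof and would need tightening in a formal write-up: (i) the identification of the continuous-time toppling count with $m_{\eta,\tau}(x)$ requires more than just Lemmas~\ref{prop:lemma2}--\ref{prop:lemma3}, since those are stated for \emph{finite} $V'$ while the dynamics lives on all of $V$; one needs the additional argument that the continuous-time process is a.s.\ well-defined and locally determined (this is handled in \cite{Rolla}); (ii) the phrase ``essentially all of these arrivals must be removed by jump topplings'' hides that sleep topplings at $z$ do not remove particles, so the balance is between arrivals and \emph{jump} departures, which is a constant fraction of $m_{V_m,\eta,\tau}(z)$ by another application of the LLN on the stack at $z$.
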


Roughly speaking, the next lemma gives that removing an instruction sleep, cannot decrease the number of instructions 
used at a given vertex for stabilization.
In order to state the lemma, consider an additional instruction $\iota$ besides $\tau_{xy}$ and $\tau_{x\rho}$. The effect of $\iota$ is to  leave the configuration unchanged; i.e., $\iota \, \eta = \eta$,
so we will call this instruction  \textit{neutral}.
Then given two arrays $\tau = \left( \tau^{x,j} \right)_{x ,\, j }$ 
and $\tilde{\tau} = \left( \tilde{\tau}^{x,j} \right)_{x, \, j }$,
we write $\tau \leq \tilde{\tau}$ if for every $x \in V$ and $j \in \mathbb{N}$,
we either have $\tilde{\tau}^{x,j} = {\tau}^{x,j}$ or we have $\tilde{\tau}^{x,j} = \iota$ and 
${\tau}^{x,j} =  \tau_{x\rho}$.

\begin{lemma}[Monotonicity with enforced activation]
   \label{prop:lemma5}
   Let $\tau$ and $\tilde{\tau}$ be two arrays of instructions such that $\tau \leq \tilde{\tau}$.
   Then, for any finite subset $V' \subset V$ and configuration $\eta \in \Omega$,  we have
   $m_{V', \eta, \tau} \leq m_{V', \eta, \tilde{\tau}}.$
\end{lemma}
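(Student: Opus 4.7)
The plan is to compare stabilizations under $\tau$ and $\tilde\tau$ by running a single toppling sequence in parallel through both instruction arrays, then using the abelian property to account for extra topplings. Concretely, fix any legal sequence $\alpha$ that stabilizes $\eta$ in $V'$ under $\tau$; such a sequence exists because $V'$ is finite, and by Lemma~\ref{prop:lemma2} it satisfies $m_\alpha = m_{V', \eta, \tau}$. I would then show (a) that $\alpha$ is also legal for $(\eta, \tilde\tau)$, and (b) that after running $\alpha$ through $\tilde\tau$ one can extend by a further legal sequence $\beta$ contained in $V'$ that stabilizes the resulting configuration in $V'$. The concatenation $\alpha\beta$ is a legal sequence that stabilizes $\eta$ in $V'$ under $\tilde\tau$, so by Lemma~\ref{prop:lemma2} applied to $\tilde\tau$,
\[
   m_{V', \eta, \tilde\tau} \;=\; m_{\alpha\beta} \;=\; m_\alpha + m_\beta \;\geq\; m_\alpha \;=\; m_{V', \eta, \tau}.
\]

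The crux is to maintain, by induction on the number $j$ of instructions already applied, the coordinatewise invariant $\eta_j^{\tilde\tau} \geq \eta_j^{\tau}$ under the order $0 < \rho < 1 < 2 < \cdots$, where $\eta_j^{\tau}$ and $\eta_j^{\tilde\tau}$ denote the configurations after the first $j$ instructions of $\alpha$ have been applied via $\tau$ and $\tilde\tau$ respectively. The base case is equality. For the inductive step at an instruction located at vertex $x_j$, three cases arise. When both arrays carry the same jump $\tau_{x_j y}$, both configurations lose one A-particle at $x_j$ and gain one at $y$; domination is preserved because the ``subtract one'' map (legally applied since $\eta_{j-1}^{\tilde\tau}(x_j) \geq \eta_{j-1}^{\tau}(x_j) \geq 1$) and the ``add one'' map (with convention $1 + \rho = 2$) are weakly monotone on this order. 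When both arrays carry the same sleep $\tau_{x_j \rho}$, the only way the $\tau$-configuration drops to $\rho$ at $x_j$ is when $\eta_{j-1}^\tau(x_j) = 1$, but then $\eta_{j-1}^{\tilde\tau}(x_j) \geq 1 > \rho$, so the invariant is preserved. Finally, when $\tau$ carries $\tau_{x_j\rho}$ and $\tilde\tau$ carries $\iota$, the $\tilde\tau$-configuration is unchanged while the $\tau$-configuration either falls from $1$ to $\rho$ or is unchanged, so again the invariant is preserved. Legality of $\alpha$ under $\tilde\tau$ follows immediately from the invariant: legality under $\tau$ forces $\eta_{j-1}^\tau(x_j) \geq 1$, and domination then yields $\eta_{j-1}^{\tilde\tau}(x_j) \geq 1$, i.e.\ $x_j$ is unstable under $\tilde\tau$ as well.

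Once (a) holds, the extension $\beta$ in (b) exists because $V'$ is finite and the configuration after $\alpha$ has finitely many particles in $V'$, so any greedy legal completion inside $V'$ terminates; this is the same finiteness that underlies the well-definedness of $m_{V',\cdot,\tilde\tau}$ via Lemma~\ref{prop:lemma2}. The main obstacle is the case analysis for the inductive invariant: one must be careful that $\rho$ sits strictly between $0$ and $1$ in the chosen order while behaving differently from every integer under both jump and sleep instructions. The order on states is precisely tailored so that domination is preserved under every legal operation, including the replacement of a sleep instruction by the neutral one; once this is verified, the abelian property does all the remaining work.
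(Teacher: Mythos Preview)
The paper does not supply its own proof of this lemma; it refers the reader to \cite{Rolla}. Your argument is correct and is essentially the standard one from that reference: maintain the coordinatewise domination $\eta_j^{\tilde\tau}\ge\eta_j^{\tau}$ along a $\tau$-legal stabilizing sequence $\alpha$, conclude that $\alpha$ is also $\tilde\tau$-legal, extend by a further legal sequence $\beta$ to stabilize under $\tilde\tau$, and invoke the Abelian property.

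One minor imprecision worth tightening: in the case where both arrays carry the same sleep instruction and $\eta_{j-1}^{\tau}(x_j)=1$, you write that $\eta_{j-1}^{\tilde\tau}(x_j)\ge 1>\rho$ and conclude the invariant is preserved. But the sleep instruction is also applied on the $\tilde\tau$-side, so if $\eta_{j-1}^{\tilde\tau}(x_j)=1$ it too drops to $\rho$. The invariant still holds (both sides are then $\rho$, or the $\tilde\tau$-side is $\ge 2$), so the conclusion stands; you should just state the post-instruction comparison rather than the pre-instruction one.
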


When we average over $\eta$ and $\tau$ using the measure $\mathcal{P}$, 
we will simply write $m_{V'}$ instead of $m_{V',\eta,\tau}$
and we will do the same for the other quantities that will be introduced later.

\subsection{Weak stabilization}

We now recall the notion of weak stabilization following \cite{Stauffer}.

\begin{definition}[weakly stable configurations]\label{def:wstable}
	We  say that a configuration $\eta$ is \emph{weakly stable} in a subset $K\subset V$ with respect to a vertex $x\in K$ 
	if $\eta(x)\leq 1$ and $\eta(y)\leq\rho$ for all $y\in K\setminus\{x\}$. 
	For conciseness, we just write that 
$\eta$ is weakly stable for $(x, K)$.
\end{definition}

\begin{definition}[weak stabilization]
   Given a subset $K\subset V$ and a vertex $x\in K$, the \emph{weak stabilization} of $(x,K)$ is a sequence of topplings of unstable vertices of $K\setminus\{x\}$ and of topplings of $x$ whenever $x$ has at least two active particles, until a weakly stable configuration for $(x,K)$ is obtained. The order of the topplings of a weak stabilization can be arbitrary.
\end{definition}

The Abelian property (Lemma \ref{prop:lemma2}),  the monotonicity property 
(Lemma \ref{prop:lemma3}) and  monotonicity with enforced activation 
(Lemma \ref{prop:lemma5}) hold for weak stabilization as well.
Since the proof of these lemmas is the same as for stabilization, for the proofs
we refer to \cite{Rolla}.
For any given particle configuration $\eta$ and instruction array $\tau$, we let 
$m^1_{(x,K), \eta, \tau}(y)$ be the number of instructions that are used at $y$
for the weak-stabilization of $(x,K)$.
By the Abelian property, this quantity is well defined.

We now formulate the Least Action Principle for weak stabilization of $(x, K)$.
In order to state the lemma, we need to extend the notion of unstable vertex and of legal operations 
to weak stabilization of $(x,K)$.
We call a vertex $y$ \textit{WS-unstable} (that is, unstable for weak stabilization)
in $\eta \in \Omega$ if $\eta(y) \geq 1 + \delta_x(y)$,
where $\delta_x(y)=1$ if $x=y$ and $\delta_x(y)=0$ otherwise.
We call a vertex $y$ \textit{WS-stable} in $\eta \in \Omega$ if
it is not WS-unstable.
We call the operation $\Phi_y$ defined in (\ref{eq:Phioperator})
\textit{WS-legal} for $\eta$ if $y$ is  WS-unstable in $\eta$.
Note that a WS-legal operation is always legal but a
legal operation is not necessarily WS-legal.
For a sequence of vertices $\alpha = ( x_1, x_2, \ldots x_k)$,
 we say that $\Phi_{\alpha}$ is
{WS-legal} for $\eta$ if $\Phi_{x_\ell}$
is WS-legal for $\Phi_{(x_{\ell-1}, \ldots, x_1)} (\eta,h) $
for all $\ell \in \{ 1, 2, \ldots k \}$.
We say that  that $\alpha$ \textit{stabilizes} $\eta$ weakly in  $(x,K)$
if every $x \in V$ is WS-stable in $\Phi_\alpha \eta$.
\begin{lemma}[Least Action Principle for weak stabilization of $(x,K)$]\label{prop:lemma1bis}
   If $\alpha$ and $\beta$ are sequences of topplings for $\eta$ such that
$\alpha$ is legal and stabilizes $\eta$ weakly in $(x,K)$ and $\beta$ is WS-legal and is contained in $K$,
then $m_{\beta} \leq m_{\alpha}$.
\end{lemma}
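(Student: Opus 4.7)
The plan is to prove $m_\beta \leq m_\alpha$ coordinate by coordinate by induction on the length $k$ of $\beta = (y_1, \ldots, y_k)$. The base case $k=0$ is trivial. For the inductive step, let $\gamma = (y_1, \ldots, y_{j-1})$ and $y = y_j$, and assume $m_\gamma \leq m_\alpha$. Since $m_{\gamma y}$ differs from $m_\gamma$ only in the $y$-coordinate, where it increases by one, the step reduces to proving the strict inequality $m_\gamma(y) < m_\alpha(y)$.

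Suppose for contradiction that $m_\gamma(y) = m_\alpha(y)$. The goal is to derive a contradiction by exhibiting that $y$ is WS-stable in $\Phi_\gamma \eta$, which will contradict WS-legality of the topple at $y$ at step $j$ of $\beta$. To this end, I would produce a legal extension $\gamma \delta$ of $\gamma$ with $m_{\gamma\delta} = m_\alpha$: this can be done by a greedy maximal-extension procedure that keeps appending any legal toppling whose cumulative count remains $\leq m_\alpha$, with $\alpha$ itself serving as the witness that ensures the procedure does not halt before reaching $m_\alpha$. Because $m_\gamma(y) = m_\alpha(y)$ already, the extension $\delta$ contains no topple at $y$. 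The abelian property at the level of legal sequences with a common count (extending Lemma~\ref{prop:lemma2}, which rests on local commutation of legal topplings at distinct vertices) then gives $\Phi_{\gamma \delta}\eta = \Phi_\alpha \eta$.

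Next, compare $(\Phi_\gamma \eta)(y)$ and $(\Phi_\alpha \eta)(y) = (\Phi_{\gamma\delta}\eta)(y)$. Every instruction executed in $\delta$ acts at some vertex $z \neq y$; its effect on the $y$-coordinate is either to leave $\eta(y)$ unchanged (sleep instructions, or jumps not directed at $y$) or to add one particle to $y$ (jumps from a neighbor into $y$, using the convention $1+\rho = 2$). Either way, along $\delta$ the value $\eta(y)$ is monotone nondecreasing in the total order $0 < \rho < 1 < 2 < \cdots$, so $(\Phi_\gamma \eta)(y) \leq (\Phi_\alpha \eta)(y)$. Since $y \in K$ and $\alpha$ weakly stabilizes $\eta$ in $(x,K)$, the right-hand side is at most $\rho$ when $y \neq x$ and at most $1$ when $y = x$; in both cases $y$ is WS-stable in $\Phi_\gamma \eta$, the desired contradiction.

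The main technical obstacle is rigorously justifying the existence of the extension $\gamma\delta$ together with the abelian-type identity $\Phi_{\gamma\delta}\eta = \Phi_\alpha\eta$ in the presence of sleep instructions: a sleep $\tau_{x\rho}$ can be ``wasted'' on an overpopulated vertex, so one must track the instruction array carefully to ensure that both legal orderings of the same multiset of topplings produce the same final configuration. This parallels the proof of the classical Least Action Principle in \cite{Rolla}; the only novelty here is that the ordinary stability threshold is replaced by the WS-stability threshold $1 + \delta_x(y)$, which enters only in the last step of the comparison above.
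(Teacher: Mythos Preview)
The paper does not give its own proof of this lemma; it simply refers to \cite{Stauffer}. Your outline is the standard argument for Least Action Principles in the Diaconis--Fulton framework (as in \cite{Rolla}), correctly adapted by replacing the ordinary stability threshold with the WS-threshold $1+\delta_x(y)$ in the final comparison. The induction on the length of $\beta$, the reduction to the strict inequality $m_\gamma(y)<m_\alpha(y)$, the legal completion $\gamma\delta$ with $m_{\gamma\delta}=m_\alpha$, and the monotonicity of the $y$-coordinate along $\delta$ are all correct.

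You are also right to flag the two technical ingredients that go beyond Lemma~\ref{prop:lemma2} as stated: (i) the existence of the legal extension $\gamma\delta$ reaching the odometer $m_\alpha$, and (ii) the fact that $\Phi_{\gamma\delta}\eta=\Phi_\alpha\eta$ once the odometers agree. Both hold in this model and are the content of the ``strong'' abelian property proved in \cite{Rolla}: for any legal sequence, the resulting configuration at each vertex is determined by the odometer alone (the sleep instruction causes no ambiguity because, in a legal sequence with final particle count $1$ at $y$ and last instruction at $y$ a sleep, legality forces all arrivals at $y$ to occur before that last instruction). So your proof is essentially the same as the one the paper defers to.
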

For the proof of the lemma, we refer to \cite{Stauffer}.

We now introduce  a stabilization procedure of $K$ consisting of a sequence of weak stabilizations of  $(x,K)$. This stabilization procedure is called \textit{stabilization via weak stabilization} and was used also in \cite{Stauffer}. From now on, we will omit the dependence of the quantities on $\eta$ and $\tau$, unless necessary, in order to lighten the notation.

\textbf{Stabilization via weak stabilization of $\boldsymbol{(x,K)}$.}
Let $\eta$ be the initial particle configuration.

\textit{First step.} We perform the weak stabilization of $(x,K)$. 
Recall that $m^{1}_{(x,K)}(y)$ is the total number of instructions that are used at $y$ for the weak stabilization of $(x,K)$ and let $\eta_1$ be the resulting particle configuration.
Note that, by definition of weak stabilisation, $\eta_1$ is either stable in $K$ or it is stable in $K \setminus \{x\}$ and it has one active particle at $x$. 
In the first case,  the stabilisation procedure is complete. 
In the second case 
we move to the second step.

\textit{$i$th step, for $i \geq 2$.}  We start by using  the next instruction at $x$
and we distinguish between two cases.

If this instruction
is sleep,  then we obtain a particle configuration which is stable in $K$.  In this case the stabilisation procedure is completed, we call $\eta_i$ the particle configuration we obtain and  define $m^i_{K, \eta, \tau}(y)$, the total  number of instructions which have been used at $y \in K$ up to this step, which by the Abelian property equals $m_{K, \eta, \tau}(y)$.

If this instruction is not sleep,  after using this instruction  we perform a new weak stabilisation of $(x,K)$. We call $\eta_i$ the particle configuration that we obtain and, for any $y \in K$, we let $m^{i}_{(x,K)}(y)$ be the number of instructions
that have been used at $y \in K$ up to this step. If $\eta_i$ is stable in $K$, then the procedure stops, otherwise we move to the next step and iterate.

Hence,  we iterate the procedure until we obtain a stable configuration. 
We let  $T_{(x,K)}$ denote the number of iterations,
\begin{equation}\label{eq:numberiterations}
T_{(x,K)} := \min\{n \in \mathbb{N}_{0} \, \, : \, \, \eta_n \mbox{ is stable } \},
\end{equation}
where $\eta_0 = \eta$ is the initial particle configuration.
Note that if $\eta$ is unstable in $K$ then  $T_{(x,K)}$ is strictly positive
and that, if $T_{(x,K)}=1$, then
the stable configuration $\eta_{T_{(x,K)}}$ hosts no particle at $x$.
For consistency, for any $i > T_{(x, K)}$,  we let $\eta_i$ be the stable configuration obtained after stabilizing $K$ and, for any $y \in K$, we define  $m^{i}_{(x, K)}(y)=m_K(y)$, which is the total number of instructions used
at $y$ for the complete stabilization of $K$. By the Abelian property, the quantities $T_{(x,K)}$ and $m^i_{(x,K)}$ are all well defined.
Note that the quantity $T_{(x,K)}$ is defined slightly differently than  in \cite{Stauffer}. Sometimes we will make explicit the dependence of 
$T_{(x,K)}$ on $\eta$ and $\tau$ by writing $T_{(x,K), \eta, \tau}$.
In Section \ref{sec:enforced stabilization} we will show that the number of weak stabilizations of $(x,K)$ that is necessary to perform to stabilize $K$ is related to the probability that the stabilization of $K$ ends with one particle at $x$,
which is an important quantity for the proof of Theorem \ref{theo1:transient graph}.
In Section \ref{sec:enforced stabilization} we will upper bound this 
probability by introducing a  new stabilization procedure which ignores the sleep instructions at $x$.

\section{Active phase on transient graphs}\label{sec:enforced stabilization}
In this section we prove Theorem \ref{theo1:transient graph}.
We first state Theorem \ref{theo:boundsQ}, where 
 the probability $Q(x,K)$ that the vertex $x \in K$ hosts an $S$-particle after  the stabilization of the finite set $K \subset V$ is  bounded away from one for any value of $\lambda \in (0, \infty)$. 
 In order for the next theorem to hold true the graph $G$ does not need to be vertex-transitive.

 \begin{theorem}
\label{theo:boundsQ}
Let  $G=(V, E)$ be a locally-finite graph and let $K \subset V$ be a finite set.
Then,  for any set $K \subset V$, for any vertex $x \in K$, 
for any positive integer $H$,
\begin{equation}   \label{eq:ubound}
   Q(x,K) \leq 1 - \Big( 1 -   \frac{G_{K^c}(x,x)}{H+1} \Big) 
    \Big ( \frac{1}{1 + \lambda} \Big)^H.
\end{equation}
\end{theorem}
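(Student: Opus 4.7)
The plan is to work inside the stabilization via weak stabilization procedure introduced in Section~\ref{sec:Diaconis}. I denote by $M_x$ the total number of instructions applied at $x$ during this procedure; by the Abelian property (Lemma~\ref{prop:lemma2}), $M_x$ does not depend on the order of legal topplings. The central device is the favorable event
\[
A_H := \{\tau^{x,j} \text{ is a jump instruction for every } j=1,2,\ldots,H\},
\]
which, by the independence of the $\tau^{x,j}$, satisfies $\mathcal{P}(A_H) = \bigl(\tfrac{1}{1+\lambda}\bigr)^H$. The first step is to observe that, if $A_H$ holds and in addition $M_x \leq H$, then every instruction applied at $x$ during the stabilization is a jump, so no sleep instruction is ever used at $x$ and $x$ cannot host an S-particle at the end. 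This yields
\[
Q(x,K) \leq 1 - \mathcal{P}(A_H \cap \{M_x \leq H\}) = 1 - \mathcal{P}(A_H)\,\mathcal{P}(M_x \leq H \mid A_H).
\]

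Next, I apply Markov's inequality conditionally to obtain
\[
\mathcal{P}(M_x \geq H+1 \mid A_H) \leq \frac{E[M_x \mid A_H]}{H+1},
\]
so the problem reduces to proving the key estimate $E[M_x \mid A_H] \leq G_{K^c}(x,x)$. Once this is established, combining the two displays gives
\[
Q(x,K) \leq 1 - \Bigl(\tfrac{1}{1+\lambda}\Bigr)^H \Bigl(1 - \tfrac{G_{K^c}(x,x)}{H+1}\Bigr),
\]
which is the desired inequality.

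The main obstacle is to establish the estimate $E[M_x \mid A_H] \leq G_{K^c}(x,x)$, and this is precisely where the new stabilization procedure alluded to in the introduction plays its role. The strategy is to arrange the order of topplings so as to recover independence from the sleep instructions at $x$: conditionally on $A_H$, each of the first $H$ topplings at $x$ is forced to be a jump, pushing one particle from $x$ to a uniformly chosen neighbor. Using the Abelian property to freely rearrange legal topplings, together with monotonicity under enforced activation (Lemma~\ref{prop:lemma5}) to dominate the actual dynamics by a ``sleepless at $x$'' dynamics, the count $M_x$ should be coupled with the visits to $x$ of a simple random walk on $V$ starting at $x$ and killed upon exiting $K$. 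Since the expected number of such visits equals $G_{K^c}(x,x)$, this coupling delivers the required bound. The crucial technical point is that the argument counts jumps rather than total instructions at $x$, and this is what prevents the factor $(1+\lambda)^H$ from exploding as $\lambda \to \infty$.
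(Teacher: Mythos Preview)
Your opening decomposition is sound: the inclusion $A_H \cap \{M_x \leq H\} \subset \{\eta'(x) \neq \rho\}$ is correct (no sleep instruction is ever read at $x$, so the terminal state at $x$ cannot be $\rho$), and the conditional Markov step is fine. The genuine gap is the key estimate $E[M_x \mid A_H] \leq G_{K^c}(x,x)$: this inequality is simply false, and the coupling you sketch cannot produce it.

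A concrete counterexample: take $K=\{x\}$, so that $G_{K^c}(x,x)=1$. Conditionally on $A_H$, if $\eta(x)=k\le H$ then the first $k$ instructions at $x$ are jumps and $M_x=k$ exactly; hence $E[M_x\mid A_H]\ge E\big[\eta(x)\,\mathbbm{1}\{\eta(x)\le H\}\big]$, which tends to $\mu$ as $H\to\infty$. Whenever $\mu>1$ the claimed bound fails. The conceptual error is that $M_x=m_K(x)$ counts topplings contributed by \emph{all} particles that ever occupy $x$, whereas $G_{K^c}(x,x)$ is the expected number of visits to $x$ by a \emph{single} random walk started at $x$; there is no mechanism that would make the former dominated by the latter, and neither the Abelian property nor Lemma~\ref{prop:lemma5} provides one.

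The paper's proof gets the Green's function into the picture by bounding not $M_x$ but an \emph{increment}: the quantity $A_{(x,K)}:=M^e_{(x,K)}(x)-M^1_{(x,K)}(x)$, the excess number of jump instructions at $x$ in the sleepless-at-$x$ stabilization over the first weak stabilization of $(x,K)$. Two features make this work. First, both $M^e_{(x,K)}(x)$ and $M^1_{(x,K)}(x)$ are insensitive to the sleep instructions at $x$, so $A_{(x,K)}$ is independent of them; this is what allows the factorization
\[
\mathcal{P}\big(\eta'(x)=\rho,\ T_{(x,K)}=\ell\big)\ \le\ \frac{\lambda}{1+\lambda}\Big(\frac{1}{1+\lambda}\Big)^{\ell-2}\,\mathcal{P}\big(A_{(x,K)}\ge \ell-2\big),
\]
after which one sums over $\ell$ and applies Markov to $A_{(x,K)}$. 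Second, $E[A_{(x,K)}]\le G_{K^c}(x,x)$ holds because enforced stabilization with $k$ initial particles at $x$ coincides with weak stabilization with $k+1$ particles at $x$, and adding \emph{one} particle at $x$ increases the expected number of jump instructions at $x$ by at most the Green's function contribution of that single extra walk. It is this one-particle incremental comparison, not a bound on the total toppling count, that makes $G_{K^c}(x,x)$ the right object. Your route attempts to bound a total rather than an increment, and that total has no uniform control in terms of $G_{K^c}(x,x)$.
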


Theorem \ref{theo1:transient graph}  is proved in the end of this section
and will be a direct consequence of Theorem  \ref{theo:boundsQ}.
We now introduce a new stabilization procedure that consists
of ignoring sleep instructions at one fixed vertex
and prove some auxiliary lemmas that are necessary for the proof of  
Theorem \ref{theo:boundsQ}. After that, we prove Theorem \ref{theo:boundsQ}
and Theorem \ref{theo1:transient graph}.

We  introduce the function $T^x$ that associates to any instruction
array $\tau$ a new instruction array $T^x(\tau)$ that is obtained from 
$\tau$ by ignoring all sleep instruction at $x$. 
More precisely, we define for any  $y \in V$ and $j \in \mathbb{N}$,
$$
\Big ( \, T^x(\tau)\, \Big )^{y,j} : =\begin{cases}
\tau^{y,j} &\mbox{ if $y\neq x$ }\\
\tau^{y,j}  &\mbox{ if $y = x$  and $\tau^{y,j} \neq \tau_{y\rho}$ } \\
\iota & \mbox{ if $y= x$  and $\tau^{y,j} = \tau_{y\rho}$},
\end{cases}
$$
recalling that  $\iota$ denotes a neutral instruction.
Moreover, for any $y, x\in V$, we let for any $i \in \mathbb{N}$,
\begin{equation}\label{eq:defenforced}
m^e_{(x,K), \eta, \tau}(y)   : = m_{K, \eta, T^x(\tau)}(y) 
\end{equation}
be the number of instructions that are used at $y$ when we stabilize the set 
$K$ by ignoring sleep instructions at $x$.
Moreover, recall the definition of stabilisation via weak stabilisation and define 
\begin{equation}
\label{eq:defenforcedstep}
m^{e,i}_{(x,K), \eta, \tau}(y)   : = m^i_{K, \eta, T^x(\tau)}(y).
\end{equation}
These functions plays an important role in this section.
For the proof of Theorem \ref{theo:boundsQ}, we will not  count the total number of 
instructions, but only the number of jump instructions. Thus, for any $y \in K$, we let
\begin{equation}\label{eq:defenforcedjump}
M^e_{(x,K), \eta, \tau}(y):  = \Big |  \, \Big \{ \tau^{y, j} \, \, : \, \,  \, j \in [0, m^e_{(x,K),  \eta, \tau}(y)],  \, \, \,  \tau^{y, j}  \neq \tau_{y\rho} \Big \} \, \Big |
\end{equation}
be the number of jump instructions that are used at $y$ when we stabilize $K$ by ignoring
sleep instructions at $x$.
Similarly, we let $M_{K, \eta, \tau}(y)$ be the  number of jump
instructions that are used at $y$ for the  stabilization of  $K$
and  $M^1_{(x,K), \eta, \tau}(y)$ be the number of jump instructions
that are used at $y$ for the weak stabilization of $(x,K)$.
In the next lemma we state some simple but important relations between these quantities.
Recall the definition of the variable $T_{(x,K)}$,   (\ref{eq:numberiterations}), 
which counts the number 
of weak stabilisations of $(x,K)$ which is necessary to perform in order 
to stabilise $K$.

\begin{lemma}\label{lemma:independence}
Let $\eta$ be an arbitrary particle configuration, 
let $\tau$ be an arbitrary instruction array,
suppose that $T_{ (x,K), \eta, \tau } < \infty$,
let $\tilde \tau = T^x(\tau)$ be obtained from $\tau$ by turning all the sleep instructions
at $x$ into a neutral instruction.
Then,  for any vertex $y \in K$,
for any $ i \in \{1, \ldots, T_{ (x,K), \eta, \tau } -1  \}$, 
\begin{align}\label{eq:independ1}
m^i_{(x,K), \eta, \tau} (y) \, &= \, m^i_{(x,K), \eta,  \tilde \tau} (y),
~~~~~
M^i_{(x,K), \eta, \tau} (y) \, = \, M^i_{(x,K), \eta,  \tilde \tau} (y), \\
\label{eq:independ4}
m^e_{(x,K), \eta,  \tau} (y) \, & \geq \, m_{K, \eta,   \tau} (y),
~~~~~~~~~
M^e_{(x,K), \eta, \tau} (y) \, \geq \, M_{K, \eta,   \tau} (y), \\
\label{eq:independ5}
&   
~~~~~~~~~T_{(x,K), \eta,  \tilde \tau} \,  \geq  T_{(x,K), \eta, \tau}.
\end{align}
Moreover, if the particle configuration which is obtained by stabilising $\eta$ in $K$ has no particle at $x$, we deduce that (\ref{eq:independ1}) holds also for $i = T_{(x,K), \eta, \tau}$.
 \end{lemma}
\begin{proof}
Recall the definition of stabilisation via weak stabilisation.  We perform a stabilisation via weak stabilisation and we check that 
at every step (\ref{eq:independ1}) holds.
\textbf{Step $\boldsymbol{i=1}$. }The first step consists in the weak stabilisation of $(x,K)$.
By definition of weak stabilisation,  when we perform the  weak stabilization of $(x,K)$, we topple $x$ only if $x$ contains 
at least two particles, so the sleep instructions at $x$ have no effect.  Hence,  we deduce that,
\begin{equation}\label{eq:check1}
m^{1}_{(x,K), \eta, \tau}(y) =  m^1_{(x,K), \eta,  \tilde \tau},
 \quad M^{1}_{(x,K), \eta, \tau}(y) =  M^1_{(x,K), \eta,  \tilde \tau}.
 \end{equation}
If  the configuration we obtain, $\eta_1$,  is not stable, we go to the next step.
If the configuration we obtain is stable, this means that  the particle configuration which is obtained by stabilising $\eta$ in $K$ has no particle at $x$
and that $T_{(x,K)} = 1$,
hence the proof is concluded in this case.

\textbf{Step $\boldsymbol{i \geq 2}$. }
The $i$th step starts by using the next instruction at $x$ and,  if this instruction is not slee in performing a weak stabilisation of $(x,K)$ afterwards. We denote by $\eta_i$
the particle configuration that we obtain at the end of the $i$th step.
Note that in the previous steps we checked that,
\begin{equation}\label{eq:checkingsteps}
\forall j = 1, 2, \ldots, i-1, \quad m^{j}_{(x,K), \eta, \tau}(y) =  m^{j}_{(x,K), \eta,  \tilde \tau},
 \quad M^{j}_{(x,K), \eta, \tau}(y) =  M^{j}_{(x,K), \eta,  \tilde \tau},
\end{equation}
We distinguish between three cases.

 \textbf{Case (i):} 
 The first case is that the first instruction we use at $x$ 
is a sleep instruction.  In this case the particle configuration we obtain, $\eta_i$,  is stable,  it hosts one sleeping particle at $x$, and $T_{(x,K), \eta, \tau} = i$,  hence (\ref{eq:independ1}) is fulfilled for any $i=1, 2, \ldots, T_{(x,K), \eta, \tau}-1$
since we checked (\ref{eq:checkingsteps}) in the previous steps.

\textbf{Case (ii):}  The second case is that the first instruction we use at $x$  is not a sleep instruction and that the weak stabilisation we perform afterwards ends with a stable configuration in $K$, i.e, $\eta_i$ is stable in $K$ and $T_{(x,K)} = i$.  By definition of weak stabilisation of $(x,K)$, this can only happen if no particle jumps from a neighbour of $x$ to $x$ while performing the  weak stabilisation,  hence $\eta_i(x) = 0$.
Hence,  in this case no sleep instruction 
is used at $x$ during the $i$th weak stabilisation and for this reason and for the fact that in the previous steps we checked (\ref{eq:checkingsteps}) we  deduce that
(\ref{eq:independ1}) holds for any $i=1, 2, \ldots, T_{(x,K), \eta, \tau}$.

\textbf{Case (iii):} The third case is that the first instruction we use at $x$ is not a sleep instruction and that  the weak stabilisation we perform afterwards ends with a particle configuration which is unstable  in $K$. 
Observe that this necessarily means (by definition of weak stabilisation) that the configuration we obtain, $\eta_i$, is stable in $K \setminus \{x\}$ and that it hosts an active particle at $x$.
Since by definition of weak stabilisation  $x$ is toppled only if it contains at least $2$ active particles at $x$,  then the sleep instructions used at $x$ have no effect. From this and from the fact that at the previous steps we checked that  (\ref{eq:checkingsteps}) holds,
we deduce that,
$$
m^{i}_{(x,K), \eta, \tau}(y) =  m^{i}_{(x,K), \eta,  \tilde \tau},
 \quad M^{i}_{(x,K), \eta, \tau}(y) =  M^{i}_{(x,K), \eta,  \tilde \tau}.
$$
We now move to the  step $i+1$ and iterate.

We iterate the  procedure until the last step, $i = T_{(x,K)}$,  which is the first step such that Case (i) or (ii) are fulfilled.  Hence,  we checked that  (\ref{eq:checkingsteps}) holds up to the last step $i = T_{(x,K)}$ and that, if the procedure ends with Case (iii), then 
(\ref{eq:independ1}) holds also for $i = T_{(x,K)}$.  This proves  (\ref{eq:independ1}) 
for any $i = 1, \ldots,T_{(x,K)}-1$ and also proves the last  claim in the statement of the lemma.  

The relations (\ref{eq:independ4}) follow from  a direct application of monotonicity with enforced activation for stabilization  (Lemma \ref{prop:lemma5}).

For (\ref{eq:independ5}),  we compare the stabilisation via weak stabilisation procedure 
for $\tau$ and $\tilde \tau$ simultaneously.
First of all note that,  while stabilising via weak stabilisation using the instructions of $\tau$,  all the sleep instructions which have been used at $x$ during the first $T_{(x,K), \eta, \tau}-1$ steps  had no effect, hence ignoring them makes no difference up to this step. 
Now consider the last step for the stabilisation via weak stabilisation which uses the instructions of $\tau$.  If such  last step starts with a sleep instruction at $x$ (as described  in Case (i)), then  the particle configuration gets stabilised, i.e,  $\eta_{  T_{(x,K), \eta, \tau}}$ is stable in $K$. This however is not true for
$\eta_{  T_{(x,K), \eta, \tilde \tau}}$, since the array $\tilde \tau$ has neutral instruction at $x$ in place of sleep instructions,  hence we deduce that the stabilisation via weak stabilisation
which uses the instructions of $\tilde \tau$ may perform further steps, i.e, 
$T_{(x,K), \eta, \tilde \tau}  \geq  T_{(x,K), \eta,  \tau}$.
Instead,  if such last step starts with a jump instruction at $x$ (as described in Case (ii)),
this means that no sleep instruction of $\tau$ was ever used at $x$ during such last step,  hence ignoring sleep instructions at $x$ makes no difference when we compare the stabilisation-via-weak stabilisation with $\tau$ and $\tilde \tau$ and for this reason 
$
T_{(x,K), \eta, \tau} = T_{(x,K), \eta, \tilde \tau},
$
and that
$\eta_{  T_{(x,K), \eta, \tau}} = \eta_{  T_{(x,K), \eta, \tilde \tau}}$
in this case.
Combining the two cases we deduce  (\ref{eq:independ5}) and conclude the proof.

\end{proof}

For the next lemma we need to recall the notion
of stabilization via weak stabilization that has been introduced in Section \ref{sec:Diaconis}, recall also (\ref{eq:defenforcedjump}).
Define,
\begin{equation}\label{eq:Afunction}
A_{(x,K), \eta, \tau} : = M^e_{(x,K), \eta,  \tau} (x)  - M^1_{(x,K), \eta,  \tau} (x).
\end{equation}
be the total number of jump instructions that are used at $x$  when
we stabilize $K$ by ignoring sleep instructions at $x$ and that are not used for the  weak stabilization of  $(x,K)$.

\begin{lemma}\label{lemma:Qandenforced}
Let $G=(V,E)$ be an arbitrary locally-finite graph and let $K \subset V$ be a finite set. 
Let $\eta^{\prime}$ be the particle configuration that is obtained after the stabilization of $K$.
Then, for any $x \in K$, for any integer $\ell \geq 2$,
\begin{equation}\label{eq:mainupperbound}
\mathcal{P} \big(  \eta^{\prime}(x) = \rho, \, \, T_{(x,K)} = \ell \big) \leq \frac{\lambda}{1+\lambda} \, \, \big( \frac{1}{1+\lambda} \big)^{\ell-2} \, \, \mathcal{P} \big(   A_{(x,K)} \geq \ell-2   \big)
\end{equation}
\end{lemma}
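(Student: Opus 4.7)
The plan is to decompose the event $\{\eta'(x)=\rho,\,T_{(x,K)}=\ell\}$ into a ``dynamics'' event, depending only on the jump instructions at $x$, the initial configuration $\eta$, and all instructions at vertices in $V\setminus\{x\}$, together with a product of $\ell-1$ independent Bernoulli coin flips that decide whether the ``inter-step'' instructions at $x$ (the ones used at the beginning of steps $2,3,\dots,\ell$ of the stabilization via weak stabilization) are jumps or sleeps. The factor $\bigl(\tfrac{1}{1+\lambda}\bigr)^{\ell-2}\tfrac{\lambda}{1+\lambda}$ will arise exactly from requiring the first $\ell-2$ of these flips to be jumps and the last to be a sleep, while the remaining dynamics event will be shown to be contained in $\{A_{(x,K)}\ge \ell-2\}$.

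To implement this, I would reveal the instructions at $x$ sequentially. First, expose those consumed by the weak stabilization at step~$1$: by Lemma~\ref{lemma:independence}, both $m^1_{(x,K)}(x)$ and the resulting configuration $\eta_1$ are measurable with respect to the jump instructions at $x$ and the full instruction array at $V\setminus\{x\}$, so no sleep instruction at $x$ beyond this prefix has yet been used. Next, expose the single inter-step instruction $\tau^{x,\,m^1_{(x,K)}(x)+1}$ used at step~$2$; by the i.i.d.\ structure of $\tau$ and the strong Markov property, it is an independent draw from the common law, equal to $\tau_{x\rho}$ with probability $\tfrac{\lambda}{1+\lambda}$ and to a uniform jump with probability $\tfrac{1}{1+\lambda}$. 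If it is a jump, we perform the weak stabilization at step~$2$; by Lemma~\ref{lemma:independence} its count at $x$ is again measurable with respect to the information already revealed. Iterating, the $\ell-1$ inter-step flips form a sequence of i.i.d.\ variables that is conditionally independent of the dynamics.

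On the event $\{\eta'(x)=\rho,\,T_{(x,K)}=\ell\}$ these flips must be jumps for $s=2,\dots,\ell-1$ and sleep for $s=\ell$, so the total probability factors and the coin-flip contribution is exactly $\bigl(\tfrac{1}{1+\lambda}\bigr)^{\ell-2}\tfrac{\lambda}{1+\lambda}$. The remaining dynamics event requires that $\eta_1(x)=1$ and that each weak stabilization performed at steps $s=2,\dots,\ell-1$ (under the forcing that all those inter-step flips are jumps) leaves $x$ with exactly one active particle. Coupling with the enforced stabilization driven by $T^x(\tau)$, this event forces the enforced process to use at least one jump at $x$ in each of its steps $2,\dots,\ell-1$, beyond the $M^1_{(x,K)}(x)$ jumps used at step~$1$; hence $A_{(x,K)}=M^e_{(x,K)}(x)-M^1_{(x,K)}(x)\ge \ell-2$ on this event, giving $\mathcal{P}(\text{dynamics event})\le \mathcal{P}(A_{(x,K)}\ge \ell-2)$.

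The main obstacle is making the sequential revelation precise: one must verify that at the moment each inter-step instruction at $x$ is exposed, the revealed history carries no information about it. This relies precisely on Lemma~\ref{lemma:independence} applied to each preceding weak stabilization of $(x,K)$, which guarantees that those weak stabilizations never make use of any sleep instruction at $x$ beyond the already-revealed prefix. Once this independence is in hand, combining the coin-flip factorization with the coupling to the enforced process yields the desired bound.
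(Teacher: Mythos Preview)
Your proposal is correct and follows essentially the same strategy as the paper. The paper first observes that $\{T_{(x,K)}\ge\ell\}\subset\{M_K(x)-M^1_{(x,K)}(x)\ge\ell-2\}$, enlarges this to $\{A_{(x,K)}\ge\ell-2\}$ via the monotonicity \eqref{eq:independ4}, and then uses that $A_{(x,K)}$ is measurable with respect to $T^x(\tau)$ (Lemma~\ref{lemma:independence}) to factor out the $\ell-1$ Bernoulli trials after conditioning on $\eta_1$; your sequential-revelation framing accomplishes the same factorization, and your ``coupling with the enforced stabilization'' plays the role of the monotonicity step.
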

\begin{proof}
Recall the stabilization-via-weak-stabilization procedure that has been introduced in Section \ref{sec:Diaconis} and  the definitions (\ref{eq:defenforced}),  
(\ref{eq:defenforcedstep}), 
(\ref{eq:defenforcedjump}).
First of all, note that for any integer $\ell \geq 2$, 
\begin{align*}
\mathcal{P} \big( \, \eta^{\prime}(x)  = \rho, \, \, T_{(x,K)} = \ell \,  \big ) & =
\mathcal{P} \big(  \, T_{(x,K)} \geq \ell, \,  \,   \tau^{x, m_{(x,K)}^{\ell-1}(x)+1} = \tau_{x \rho}  \, \big ) \\
& = \frac{\lambda}{1+\lambda} \, \, \mathcal{P} \big(  \, T_{(x,K)} \geq \ell \,  \big ).
\end{align*}
In the previous display 
$\tau^{x, m_{(x,K)}^{\ell-1}(x)+1}$
is the first instruction which is used at $x$  during the $\ell$th step
of the stabilisation via weak stabilisation of $(x,K)$.
This first equality holds true since, by definition of stabilisation via weak stabilisation, the event $\{  \eta^{\prime}(x) = \rho, T_{(x,K)}= \ell  \}$
occurs  if and only if
$T_{(x,K)} > \ell -1$ 
(i.e., the $\ell-1$th  step does not end with a stable configuration)
and the first instruction
used at $x$ during the  $\ell$-th step is sleep.
The second equality above follows from the independence of the instructions. Now note that,
\begin{align*}
\Big \{  T_{(x,K)} \geq \ell \Big \}   
& =   \Big \{ \forall i \in [1, \ell-2], \, \,  \,  \, \, \tau^{x, m^i_{(x,K)}(x)+1} \, \neq \tau_{x\rho} \mbox{ and } 
m^{i+1}_{(x,K)}(x) > m^{i}_{(x,K)}(x)   \Big \} \cap 
\Big \{  T_{(x,K)} \geq \ell \Big \}   \\
& = 
\Big \{ \forall i \in [1, \ell-2], \, \,  \,  \, \, \tau^{x, m^{i,e}_{(x,K)}(x)+1} \, \neq \tau_{x\rho} \mbox{ and } 
m^{i+1,e}_{(x,K)}(x) > m^{i,e}_{(x,K)}(x)   \Big \} \cap 
\Big \{  T_{(x,K)} \geq \ell \Big \}  \\
& \subset 
\Big \{ \forall i \in [1, \ell-2], \, \,  \,  \, \, \tau^{x, m^{i,e}_{(x,K)}(x)+1} \, \neq \tau_{x\rho} \mbox{ and } 
m^{i+1,e}_{(x,K)}(x) > m^{i,e}_{(x,K)}(x)   \Big \} \cap 
\Big \{  T^e_{(x,K)} \geq \ell \Big \}.
\end{align*}
The first identity holds true since, in order for the stabilization-via-weak-stabilization procedure to consist of at least $\ell \geq 2$  steps, it is necessary that 
the first instruction used at $x$ during the steps  $j=2, 3, \ldots,$ $\ell-1$ 
is not a sleep instruction, but a jump instruction.
The second identity and the  inclusion follow from Lemma \ref{lemma:independence}.
From the fact that the function $T^e_{(x,K)}$ is independent from the
sleep instructions at $x$ and from the previous inclusion relation we deduce that,
\begin{align}
\begin{split}\label{eq:split}
& \mathcal{P} \Big (  T_{(x,K)} \geq \ell   \Big )  \\
& \leq 
\mathcal{P} \Big ( \big  \{ \forall i \in [1, \ell-2], \, \,  \,  \, \, \tau^{x, m^{i,e}_{(x,K)}(x)+1} \, \neq \tau_{x\rho} \mbox{ and } 
m^{i+1,e}_{(x,K)}(x) > m^{i,e}_{(x,K)}(x)\big  \}   \cap \big  \{  T^e_{(x,K)} \geq \ell \big \} \Big ) \\
& \leq   \big ( \frac{1}{1 + \lambda } \big )^{\ell - 2} \, \, 
\mathcal{P} \Big ( \big \{ \forall i \in [1, \ell-2], \,  
m^{i+1,e}_{(x,K)}(x) > m^{i,e}_{(x,K)}(x)\big  \}   \cap \{  T^e_{(x,K)} \geq \ell  \big \} \Big ) \\
& =   \big ( \frac{1}{1 + \lambda } \big )^{\ell - 2} \, \,  
\mathcal{P} \big (
T^e_{(x,K)} \geq \ell \big ) \\
& \leq  \big ( \frac{1}{1 + \lambda } \big )^{\ell - 2}  \mathcal{P} \Big ( M^e_{(x,K)}(x) - M^1_{(x,K)}(x) \geq \ell -2  \Big ),
\end{split}
\end{align}
For the last inequality we used the fact that,  if the stabilization via weak stabilization of $(x,K)$   consists of at least $\ell \geq 2$  steps, 
then it is necessarily the case that at least $\ell - 2$ jump instructions are used at $x$
after the first step.
This concludes the proof.
\end{proof}

\begin{remark}
In \cite{Stauffer} the quantity in the left-hand side of (\ref{eq:mainupperbound}) is bounded from above by 
the probability that at least $\ell-2$  instructions
 are used at $x$ after the first weak stabilization, without distinguishing between jump and sleep instructions.
Our enhancement is obtained by counting only the jump instructions which are used for the stabilisation and by introducing 
a stabilization procedure, (\ref{eq:defenforced}), that ignores sleep instructions at one vertex. This allows us to recover independence from sleep instructions and, thus, to split  the upper bound in (\ref{eq:split}) into the product of two  factors, which are then bounded from above separately.
\end{remark}

 In the next lemma we will bound from above the expectation of $A_{(x,K)}$.

\begin{lemma}\label{lemma:upperboundA}
Let $G$ be a locally-finite graph and let $K \subset V$ be a finite set. Then, for any $x \in K$,
\begin{equation}\label{eq:upperboundA}
\boldsymbol{E} \Big ( A_{(x,K)} \Big ) \leq G_{K^c}(x, x),
\end{equation}
where $\boldsymbol{E}$ is the expectation with respect to $\mathcal{P}$.
\end{lemma}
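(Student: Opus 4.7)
My plan is to use the Abelian property to reduce $A_{(x,K)}$ to the number of visits of a single random walker to $x$, then bound this by $G_{K^c}(x,x)$ via a coupling with a simple random walk killed upon leaving $K$.

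First, I would carry out the enforced stabilization of $K$ by doing the weak stabilization of $(x,K)$ first, and then a continuation phase that stabilizes the resulting $\eta_1$ in the enforced sense. Since the weak stabilization topples $x$ only when $\eta(x)\geq 2$, it never effectively uses a sleep instruction at $x$, so it proceeds identically under $\tau$ and $T^x(\tau)$ and uses exactly $M^1_{(x,K)}(x)$ jump instructions at $x$. By independence of the $\tau^{x,j}$'s, the continuation phase is then a fresh enforced stabilization from $\eta_1$, driven by instructions independent of the weak stabilization, and $A_{(x,K)}$ equals the number of jump instructions used at $x$ during this continuation. When $\eta_1(x)=0$ the configuration $\eta_1$ is already stable in the enforced sense, so $A_{(x,K)}=0$; it remains to deal with $\eta_1(x)=1$, in which the continuation begins with one A-particle at $x$ and $\rho$-particles on some set $R\subset K\setminus\{x\}$.

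Next, using the Abelian property again, I would order the topplings of the continuation as a single-walker exploration. A walker starts at $x$ and, at each step, uses the next instruction at its current vertex $v$: at $v=x$, sleep is neutral, so the walker moves to a uniform neighbour whenever the next jump instruction appears, contributing exactly one jump at $x$ per visit; at $v\neq x$ with no other particle present, a sleep instruction converts the walker into a $\rho$ and ends its trajectory, while a jump instruction moves it; and at $v\in R$, the walker wakes up the $\rho$ to give $\eta(v)=2$, sleep instructions are ineffective, and the next jump instruction lets one particle leave $v$. By the Abelian choice I let the walker identity stay with the particle that remains at $v$, so the walker continues its trajectory from $v$. Under this ordering, the total number of jump instructions used at $x$ during the continuation is equal to the number of visits of this single walker to $x$.

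To conclude, I would couple this walker's trajectory with the simple random walk $X_t$ on $G$ started at $x$ and killed upon leaving $K$, driven by the same sequence of jump instructions. The walker and $X_t$ take identical steps for as long as the walker is alive, so the walker's trajectory is a prefix of $X_t$'s path restricted to $K$; the walker may only terminate earlier, by sleeping at a non-$x$ vertex with no other particle present. Hence it visits $x$ at most as often as $X_t$ does before $\tau_{K^c}$, and taking expectations gives $\boldsymbol{E}[A_{(x,K)}]\le E_x\bigl[\sum_{t=0}^{\tau_{K^c}-1}\mathbbm{1}\{X_t=x\}\bigr]=G_{K^c}(x,x)$, as claimed. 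The main obstacle I expect is the rigorous justification of the identity-transfer rule at $\rho$-encounters: after the swap, the former walker becomes a $\rho$-particle at $v$, and one must verify carefully via the Abelian property that this relabelling does not cause the bookkeeping to miss any jump instruction at $x$—in particular, that the relabelled $\rho$-particle cannot later be reactivated in a way that would contribute further visits to $x$ beyond those already attributed to the single coupled walker.
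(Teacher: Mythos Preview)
Your approach has a genuine gap at precisely the point you flag as the main obstacle. The single-walker reduction does not account for all jump instructions at $x$: when the walker reaches a vertex $v\in R$ and wakes the $\rho$-particle there, one particle jumps away while the other remains, and \emph{both} are now active. Whichever one you call ``the walker'', the other is a liberated active particle that can itself later visit $x$ and contribute to $A_{(x,K)}$. Following only the walker therefore yields at best a \emph{lower} bound on $A_{(x,K)}$ (the walker's topplings form a WS-legal subsequence of the full enforced stabilization, so the Least Action Principle goes the wrong way). The ``swap'' you propose, in which the former walker becomes a $\rho$-particle at $v$, is not a legal toppling: with two particles present at $v$ neither can be in state $\rho$, so this relabelled configuration does not arise from the Diaconis--Fulton dynamics and no Abelian or monotonicity property applies to it. A small example on a triangle with $\rho$-particles at both non-$x$ vertices already shows that $A_{(x,K)}$ can strictly exceed the walker's visits to $x$ along any such single trajectory.

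The paper sidesteps this difficulty by a different decomposition. Instead of analysing the continuation after weak stabilization, it conditions on the number $k$ of particles initially at $x$ and proves two claims: first, $\boldsymbol{E}_k\bigl[M^e_{(x,K)}(x)\bigr]=\boldsymbol{E}_{k+1}\bigl[M^1_{(x,K)}(x)\bigr]$, because ignoring sleep at $x$ is equivalent to pinning one extra particle there; second, $\boldsymbol{E}_{k+1}\bigl[M^1_{(x,K)}(x)\bigr]\le\boldsymbol{E}_k\bigl[M^1_{(x,K)}(x)\bigr]+G_{K^c}(x,x)$, obtained by first walking one particle from $x$ out of $K$ (ignoring sleep along its path) and only then performing the weak stabilization of the remaining $k$-particle configuration. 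The key point is that this single walker is released into the \emph{initial} environment, before any $\rho$-particles have been created, so its trajectory is a genuine simple random walk killed at $K^c$, and the Least Action Principle for weak stabilization then furnishes the inequality in the required direction.
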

\begin{proof}
Note that the expectation of $A_{(x,K)}$ can be written as follows,
\begin{equation}\label{eq:claim0}
\boldsymbol{E} \Big ( A_{(x,K)} \Big ) = \sum\limits_{k=0}^{\infty} \, Poi_{\mu}(k) \,  \, \Big [ 
  \boldsymbol{E}_k \big ( M^e_{(x,K)}(x) \big )  \, -  \,  \boldsymbol{E}_k \big ( M^1_{(x,K)}(x)\big ) \, \,  \Big ]
\end{equation}
where $\boldsymbol{E}_k$  is the expectation $\boldsymbol{E}$ conditional on having precisely $k$ particles starting from $x$ at time $0$ and $Poi_{\mu}(k)$ is the probability that a Poisson random variable with mean $\mu$ has outcome $k$.
We claim that, for any $k \in \mathbb{N}$,
\begin{equation}
\label{eq:claim1}
\boldsymbol{E}_{k+1} \Big ( M^1_{(x,K)}(x) \Big) = \boldsymbol{E}_k \Big (M^e_{(x,K)}(x) \Big ),
\end{equation}
and that
\begin{equation}
\label{eq:claim3}
\boldsymbol{E}_{k+1} \Big ( M^1_{(x,K)}(x) \Big ) \leq \boldsymbol{E}_{k} \Big ( M^1_{(x,K)}(x) \Big ) \, + \, G_{K^c}(x,x).
\end{equation}
By using (\ref{eq:claim1}) and (\ref{eq:claim3}) we obtain from (\ref{eq:claim0}) that,
\begin{align*}
\boldsymbol{E} \Big ( A_{(x,K)} \Big ) & = 
\sum\limits_{k=0}^{\infty} \, Poi_{\mu}(k) \,  \, \Big [ 
  \boldsymbol{E}_{k+1} \big ( M^1_{(x,K)}(x) \big )  \, -  \,  \boldsymbol{E}_k \big ( M^1_{(x,K)}(x)\big ) \, \,  \Big ]  \\
 &  \leq  \sum\limits_{k=0}^{\infty} \, Poi_{\mu}(k)\, \,  G_{K^c}(x, x) \\ 
 & = G_{K^c}(x, x),
\end{align*}
obtaining the desired inequality (\ref{eq:upperboundA}).
So, in order to conclude the proof, it remains to prove (\ref{eq:claim1}) and (\ref{eq:claim3}).

The equality (\ref{eq:claim1}) holds true since adding one particle at a $x$ and never moving that particle
is equivalent to stabilizing $K$ 
by ignoring all the sleep instructions at $x$.
For a formal proof,  let  $\eta^{k+1}$ be an arbitrary particle configuration with $k+1$ particles at $x$
and let $\eta^k$ be  obtained from $\eta^{k+1}$ by removing
one of the particles at $x$.
Let $\tau$ be an arbitrary array and let $\tilde \tau$ be obtained from $\tau$ by turning
sleep instructions at $x$ into a neutral instruction. We use  the instructions of $\tau$  for $\eta^{k+1}$ and 
the instructions of $\tilde \tau$ for $\eta^k$ simultaneously.
More specifically,  let $\alpha = (x_1, x_2, \ldots x_{|\alpha|})$ be a sequence 
that stabilizes $\eta^k$ in $K$ by using the instructions of $\tilde \tau$.
Since any step of $\alpha$ is legal 
for $\eta^{k}$ when we use $\tilde \tau$ (a neutral instruction is always legal), it is also WS-legal for  $\eta^{k+1}$
when we use $\tau$.
Moreover, since $\Phi_{\alpha} \eta^k$ is stable
in $K$ and has no particle at $x$ when we use $\tilde \tau$, then 
$\Phi_{\alpha} \eta^{k+1}$ is weakly stable in $(x,K)$
when we use  $ \tau$.
Thus, the sequence $\alpha$ stabilizes $\eta^k$ in $K$
when we use the instrutions of $\tilde \tau$ and stabilizes 
$\eta^{k+1}$ weakly in $(x,K)$ when we use the instructions of $\tau$.
From the Abelian property  we deduce that for any $y \in K$,
$
m^1_{(x,K), \eta^{k+1}, \tau}(y) = m^e_{(x,K), \eta^{k}, \tau}(y).
$
This implies (\ref{eq:claim1}).

We now prove  (\ref{eq:claim3}), adapting the steps of a similar proof that appears in \cite{Stauffer}
to our  setting.
Let $\eta$ be an arbitrary particle configuration with $k+1$ particles at $x$.
In the first step, we move one of the particles that is at $x$ until it leaves the set $K$, ignoring any sleep instruction.
During  this step of the procedure we might use some instruction at $x$
that is WS-illegal (but legal).
The  expected number of times a jump instruction is used at  $x$ during this step 
 is  $G_{K^c}(x,x)$.
In the second step, we perform weak the stabilization of $(x,K)$ with the remaining particles.
Let $M_{K, \eta, \tau}^{\prime}(x)$ be the total number of jump instructions that are used at $x$. We have that,
by monotonicity with enforced activation and by the least action principle
for weak stabilization,
$$
 M^1_{(x,K), \eta, \tau}(x)  \, \,  \leq  \, \, M_{K, \eta, \tau}^{\prime}(x) .
$$
Moreover, since in the second step we start from a configuration with $k$ particles at $x$ and instructions are independent,
$$
\boldsymbol{E}_{k+1} \big ( \,  M_K^{\prime} (x)  \, \big ) =  \boldsymbol{E}_k \big ( \,  M^1_{(x,K)}(x) \, \big ) + G_{K^c}(x,x) .
$$
By using the two previous relations we obtain (\ref{eq:claim3}).

\end{proof}

\subsection{Proof of Theorem \ref{theo:boundsQ}}
The proof of Theorem \ref{theo:boundsQ} is a direct consequence of 
Lemma \ref{lemma:Qandenforced} and  Lemma \ref{lemma:upperboundA}.

From Lemma \ref{lemma:Qandenforced} we obtain that,
\begin{align*}
Q(x,K) & = 
\sum\limits_{ \ell=2}^{\infty} \, \mathcal{P} \big(  \eta^{\prime}(x) = \rho, \, \, T_{(x,K)} = \ell \big)  \\
& \leq \sum\limits_{ \ell=2}^{\infty}  \frac{\lambda}{1+\lambda} \, \, \big( \frac{1}{1+\lambda} \big)^{\ell-2}  
   \, \,  \mathcal{P} \big(   A_{(x,K)} \geq \ell -2  \big),
\end{align*}
having used the fact that  $\mathcal{P} \big(  \eta^{\prime}(x) = \rho, \, \, T_{(x,K)} = 1 \big) =0$
and that $T_{(x,K)} > 0 $ almost surely.
 We now perform simple calculations in order to prove the quantitative upper bound of Theorem \ref{theo:boundsQ}.
By using the Markov's inequality and Lemma \ref{lemma:upperboundA}, we obtain that for any positive integer $H$,
\begin{align*}
\mathcal{P} \big(  \eta^{\prime}(x) = \rho \big) &  \leq
 \sum\limits_{ \ell=0}^{H-1}  \frac{\lambda}{1+\lambda} \, \, \big( \frac{1}{1+\lambda} \big)^{\ell}  
\, +\, \sum\limits_{ \ell=H}^{\infty}  \frac{\lambda}{1+\lambda} \, \, \big( \frac{1}{1+\lambda} \big)^{\ell}  
    \mathcal{P} \big(   A_{(x,K)} \geq \ell   \big) \\
    & \leq \frac{\lambda}{1+\lambda} \, \, \Big [  \sum\limits_{ \ell=0}^{H-1}  \, \big( \frac{1}{1+\lambda} \big)^{\ell}  
\, +\, \, \frac{G_{K^c}(x,x)}{H+1} \,    \sum\limits_{ \ell=H}^{\infty}   \, \, \, \big( \frac{1}{1+\lambda} \big)^{\ell}   \Big ] = \\
    & \leq \frac{\lambda}{1+\lambda} \, \, \Big [ \frac{1}{1 - \frac{1}{1 + \lambda}}  \, \, - \, \,    \Big( 1 -   \frac{G_{K^c}(x,x)}{H+1} \Big) 
    \Big ( \frac{1}{1 + \lambda} \Big)^H \, \, \frac{1}{ \Big ( 1 - \frac{1}{1 + \lambda} \Big)}\, \, \Big ] \\
    & = 1 - \Big( 1 -   \frac{G_{K^c}(x,x)}{H+1} \Big) 
    \Big ( \frac{1}{1 + \lambda} \Big)^H.
\end{align*}
This concludes the proof of Theorem \ref{theo:boundsQ}.

\subsection{Proof of Theorem \ref{theo1:transient graph}}
Suppose  that the graph is vertex-transitive and transient.
We have that for any set $K \subset V$ and any vertex $x \in K$, 
$G_{K^c}(x,x) \leq G(0,0) < \infty$. 
Thus, if we replace $G_{K^c}(x,x)$ by $G(0,0)$ in the right-hand side of (\ref{eq:ubound}),
the inequality is still true.
Moreover, if we set $H$ large enough, we have that the right-hand side of  (\ref{eq:ubound})
is bounded away from one uniformly in  $K$ and in $x \in K$ for any $\lambda >0$.
We can then find a function $g(\lambda)$ such that,
\begin{equation}\label{eq:bound2}
 \forall K \subset V, \, \, \, \,  \, \, \, \, \forall x \in K, \, \, \, \,  \, \, \, \,Q(x,K) \leq g(\lambda)<1 .
\end{equation}
Moreover, by choosing  $H^{*} :=    \lceil \sqrt{  \frac{G(0,0)}{\log(1 + \lambda)}  } \rceil$,
we deduce that $g(\lambda)$ can be chosen such that 
$\lim_{\lambda \rightarrow 0} \frac{g(\lambda)}{\lambda^{\frac{1}{2}}} < \infty$.

Suppose now that $\mu > g(\lambda)$.
Since from (\ref{eq:bound2}) we have that the expected number of particles after the stabilization of $K$ is at most 
$g(\lambda) \,  |K|$, it follows that the expected number of particles leaving $K$ during the stabilization
of $K$ is at least $ (  \, \mu - g(\lambda) \, )  \, \, | K|$.
Since a positive density of particles leaves the set,
since  the graph is amenable, and since $K$ is an arbitrary
finite set, we deduce from  
\cite{Rolla2}[Proposition 2] that
the system is active. This implies that
 $\mu_c(\lambda) \leq g(\lambda)$
 for any $\lambda \in (0, \infty)$ and concludes the proof.

\section{Fixation on non-amenable graphs}
 \label{sec: Fixation on non-amenable graphs} 
In this section we prove Theorem \ref{theo: non amenable}.
We start with an auxiliary lemma, which provides an
upper bound for the expected number of times that the particles
which start from the vertices that are `close' to the boundary of a ball
visit the  centre of that ball.
Afterwards, we use this lemma to prove Proposition \ref{prop:fixation non-amemable},
showing that the probabilities  $\{Q(x,B_{L}) \}_{x \in B_L}$
must fulfil a certain condition.
Finally, we prove Theorem \ref{theo: non amenable} 
by showing that, if one assumes that ARW is active
and that $\mu < \frac{\lambda}{1 + \lambda}$, then 
such a condition is violated, 
obtaining a contradiction.

\begin{lemma}\label{lemma:RWboundary}
Let $G$ be a vertex-transitive graph where the random walk has a positive speed. 
There exists $C_1=C_1(G) < \infty$ such that, for any $\delta \in (0,1)$,
there exists an infinite increasing sequence of integers  $\{L_n\}_{n \in \mathbb{N}}$
such that
$$
\sum\limits_{x \in B_{L_n} \setminus B_{(1 - \delta) \, L_n} }  G_{B_{L_n}^c} \big (x, 0 \big ) \,  \leq \, C_1 \, \delta  \,  L_n.
$$
\end{lemma}
\begin{proof}
For any pair of real numbers $r_2 > r_1$, let $$\Xi(r_1,r_2) : = 
E_0 \Big ( \sum\limits_{t=0}^{\infty}   \mathbbm{1}\{ X(t) \in B_{r_2}\setminus B_{r_1} \} \Big),$$
be the expected number of vertices in the ring $B_{r_2} \setminus B_{r_1}$
which are visited by the random walk,
with $\Xi(0,r_2) $ being the expected  number of vertices in the ball  $B_{r_2}$
which are visited by the random walk.
By regularity of the graph, for any integer $n$ and $x \in B_{n}$, we have that,
\begin{equation}\label{eq:regularity}
P_{x}\big ( \tau_0 < \tau_{B_n^c} \big ) \, = G_{B_n^c \cup \{0\}}(x,x) \, \, P_0 \big ( \tau_x < \tau_{ \{0\} \cup B_n^c}^+ \big).
\end{equation}
Then, for any $\delta^{\prime} \in (0,1)$,
\begin{align}
\sum\limits_{x \in B_n \setminus B_{(1 - \delta^{\prime}) \, n}} G_{B_n^c}(x,0) \, \, & = \, \, G_{B_n^c}(0,0) \sum\limits_{x \in B_n \setminus B_{(1 - \delta^{\prime}) \, n}} P_{x}\big ( \tau_0 < \tau_{B_n^c} \big )  \\
\label{eq:condtris}
& \leq G(0,0)^2 \, \, \sum\limits_{x \in B_n \setminus B_{(1 - \delta^{\prime}) \, n}} P_{0}\big ( \tau_x < \tau_{B_n^c} \big ) \\
\label{eq:cond4}
& \leq G(0,0)^2 \, \, \Xi \Big ( \, (1 - \delta^{\prime}) n , \, n \, \Big ),
\end{align}
where we used (\ref{eq:regularity})  and vertex-transitivity.
We have that, 
\begin{equation}\label{eq:cond1}
\forall n \in \mathbb{N} ~~~~~ \Xi \big (  0, n \big )  \geq  \sum\limits_{\ell=1}^{  \lfloor \frac{1}{\delta^{\prime}} \rfloor } \, \Xi \big (  \, \,  \delta^{\prime} \, n  \, (\ell-1),\,   \delta^{\prime} \, n   \, \ell \, \,  \big ) .
\end{equation}
Since the random walk has a positive speed, we have that there exists $K=K(G)$ such that, 
\begin{equation}\label{eq:cond2}
\forall n \in \mathbb{N} ~~~~~  \Xi \big (  0, n \big ) \, \leq \, K \, \, n,
\end{equation}
(see for example \cite{Stauffer}[eq. (5.16)] for a proof).
Conditions (\ref{eq:cond1}) and (\ref{eq:cond2}) imply that,
\begin{equation}\label{eq:cond3}
\forall n \in \mathbb{N} ~~~~~~ \exists \ell_n \in [\frac{1}{2 \delta^{\prime}} , \frac{1}{\delta^{\prime}}] ~~~~~~ \mbox{s.t.} ~~~~~~\Xi \big (  \, \delta^{\prime} \,  n \,  (\ell_n-1) ,\,  \delta^{\prime} \, n  \, \ell_n  \,   \big )  \leq 4 \, K \, \delta^{\prime} \, n.
\end{equation}
For any $n \in \mathbb{N}$, define now  $L_n :=\lfloor  \delta^{\prime} \, n \, \ell_n \rfloor$. From (\ref{eq:cond3}) we obtain that, for any large enough $n$,
\begin{multline}
\Xi \big (  \, L_n ( 1 \, - \, \frac{\delta^{\prime}}{2} )  , \, \,  L_n \,   \big ) \,  \leq
\Xi \big (\, L_n ( \frac{\delta^{\prime} n \ell_n}{L_n} \, -  \delta^{\prime}), L_n    \big ) \leq 
\Xi \big (\, L_n ( \frac{\delta^{\prime} n \ell_n}{L_n} \, -  \frac{1}{\ell_n}) , L_n    \big )
= \\
\Xi \big (\, \delta^{\prime} n \ell_n\, -  \frac{L_n}{\ell_n} , L_n    \big ) \leq 
\Xi \big (\,\delta^{\prime} n  (\ell_n\, -  1), \delta^{\prime} n \ell_n    \big ) 
\leq  \, 4 \,  K \,  \, \frac{\delta^{\prime} \, n \, \ell_n}{\ell_n} \leq
5 K \frac{L_n}{\ell_n} \leq 10 K  \delta^{\prime} L_n.
\end{multline}
The proof follows by  defining $\delta =  \frac{\delta^{\prime}}{2}$,
$C_1 = 20 \, K \, G(0,0)^2$ and by selecting an infinite increasing subsequence
of $\{L_n\}_{n \in \mathbb{N}}$.
\end{proof}

\begin{proposition}\label{prop:fixation non-amemable}
Let $G$ be vertex-transitive and suppose that the random walk on $G$ has a positive speed.
Then, for any  $\delta \in (0,1)$, there exists an  infinite  increasing sequence of integers $\{L_n\}_{n \in \mathbb{N}}$ such that 
\begin{equation}\label{eq:constraintQ}
\sum\limits_{x \in B_{(1 - \delta) \, L_n}} \, \, \,  G_{B_{L_n}^c} (x,0) \, 
\, \, \big (  Q(x,B_{L_n})  \,  - \,   \mu  \big ) \, \, \, \leq  \mu \, \, C_1  \, \, \delta\,  L_n,
\end{equation}
where $C_1=C_1(G)$ is the constant that has been defined in Lemma \ref{lemma:RWboundary}.
\end{proposition}
\begin{proof}
The expected number of particles visiting the origin is related to the 
quantities $\{ Q(y,B_L)\}_{y \in B_L}$ by the following relation,
\begin{equation}\label{eq:relationQm}
\mathbb{E}_{B_L}\big ( M_{B_L}(0) \big) = \sum\limits_{y \in B_L} \,  G_{B^c_L}(y, 0)  \, \, \big ( \,   \mu - Q(y, B_L) \,  \big),
\end{equation}
where $M_{B_L}(y)$ is the total number of jump instructions which are used at $y \in B_L$ for the stabilization of $B_L$.
We will first prove (\ref{eq:relationQm}) and then use it to prove the proposition.
In order to prove (\ref{eq:relationQm}), we use the ghost explorer technique similarly
to \cite{Shellef, Stauffer}.
First, we let the particles move until the ball $B_L$ is stable.
This means that some particles leave  $B_L$ being absorbed at the boundary
and other particles remain in $B_L$ after having turned into the S-state.

We now let a \textit{ghost particle} start an independent 
simple random walk from every vertex that is occupied by an S-particle in $B_L$.
Ghost particles are `killed' whenever they visit  $B_L^c$.
We let $R_{B_L}(x)$ be the total number of visits at  $x \in B_L$ by ghost particles. We let 
$W_{B_L}(x)$ be the total number of visits at $x$ by normal particles or by ghost particles.
Now we have that,
$$
M_{B_L}(x) = W_{B_L}(x) - R_{B_L}(x).
$$
As both particles and ghost particles stop only when they leave $B_L$ and as random walks
are independent, we have that,
$$
 \tilde{E}_{B_L}[ W_{B_L}(x)]  =  \mu \,  \sum\limits_{y \in B_L} \,  G_{B^c_L}(y, x),
$$
where $\tilde {E}_{B_L}$ is  the expectation in the enlarged probability space
of activated random walks and ghost particles.
Moreover, since precisely one ghost leaves from every vertex where an S-particle
is located, 
$$
 \tilde{E}_{B_L}[ R_{B_L}(x)]  =   \sum\limits_{y \in B_L} \,  G_{B^c_L}(y, x) \, \, Q(y, B_L),
$$
by linearity of expectation.
The proof of (\ref{eq:relationQm}) is concluded again by using linearity of expectation.

By using (\ref{eq:relationQm}) and
Lemma \ref{lemma:RWboundary}, we obtain that, for any $\delta \in (0,1)$, there exists an
infinite increasing sequence of integers $\{L_n\}_{n \in \mathbb{N}}$ such that,
$$
\mathbb{E}\big (\, M_{B_{L_n}}(0) \, \big ) \leq \,  \,  \sum\limits_{y \in B_{(1 - \delta) L_n }} \, \, \Big ( \,  \mu - Q(y,B_L) \,  \Big )  \, \, G_{B_{L_n}^c}(y,0) \, \, \, \, + \, \, \,\mu \,  C_1 \,  \delta \,  L_n.
$$
Since the left-hand side of the previous inequality has to be non-negative for any $n$, we obtain (\ref{eq:constraintQ}),
concluding the proof.

\end{proof}

\begin{proof}[\textbf{Proof of Theorem \ref{theo: non amenable}}]
We will show that, if $G$ is such that the random
walk has a positive speed, then  condition (\ref{eq:constraintQ}) cannot be  satisfied
for an infinite increasing sequence $\{L_n\}_{n \in \mathbb{N}}$
when $\mu < \frac{\lambda}{1+\lambda}$ and ARW is active,
obtaining a contradiction and concluding that ARW fixates
when $\mu < \frac{\lambda}{1+\lambda}$.

First of all, note that 
\begin{equation}\label{eq:lowerboundQ}
Q(x, B_L)  \geq \, \mathcal{P} \big ( \, m_{B_{ L} (x)}(x) \geq 1 \,  \big ) \, \frac{\lambda}{1+\lambda} .
\end{equation}
This inequality was proved in \cite{Stauffer} and follows from the next relation,
\begin{equation}\label{eq:relation1}
\{   m^1_{(x, B_{ L} (x))}(x) \geq 1\}  \, \, \cap \, \, \{ \tau^{x, m_{(x,K)}^1(x) + 1} = \tau_{x \rho}  \} \, \,  \subset \, \,  \{ \eta^{\prime}(x) = \rho    \},
\end{equation}
Indeed, if one concludes weak stabilization of $(x,K)$ with one particle at $x$ and the next instruction at $x$ is sleep,
then the stabilization is completed with one particle at $x$.
Moreover, at least one instruction is used at $x$ during the stabilization of $K$ if and only if
at least one instruction is used at  $x$ during the weak stabilization of $(x,K)$.
By independence of instructions, one obtains
(\ref{eq:lowerboundQ}).

Thus, assume that $\mu < \frac{\lambda}{1+\lambda}$ and that ARW is active
and let $D : =  \frac{\frac{\lambda}{1+\lambda} - \mu}{2} >0$. 
We have that, for any $\delta>0$ and for any $L$ large enough depending on $\delta$,
\begin{align}
\forall x \in B_{(1-\delta) L }, ~~~~~~~
Q(x, B_L) \,
& \geq \, \mathcal{P} \big ( \, m_{B_{\delta L} (x)}(x) \geq 1 \,  \big ) \, \frac{\lambda}{1+\lambda}  
\label{eq:step1bis} \\
 & \geq \, \mathcal{P} \big ( \, m_{B_{\delta L}}(0) \geq 1 \,  \big ) \, \frac{\lambda}{1+\lambda}  
\label{eq:step1tris} \\
& \geq \mu + D.
\label{eq:step2}
\end{align}
where the first inequality follows from (\ref{eq:lowerboundQ})
and  from  monotonicity (Lemma \ref{prop:lemma3}), for the second  inequality we used vertex-transitivity and 
for the third inequality we used the definition of activity and Lemma \ref{prop:lemma4}.

For any $\delta \in (0,1)$ and for any $L$ large enough the next inequality holds,
\begin{align}
\sum\limits_{y \in B_{ (1-\delta) \, L }} G_{B_{L}^c}(y,0)\Big ( Q(y,B_L ) \, - \, \mu \Big )  & 
  \geq  D \sum\limits_{y \in B_L} G_{B^c_L}(y,0)  \\
& =  \, \, D \, G_{B_{L  }^c}(0,0) \, \,  \mathbb{E}_0 \Big (
\sum\limits_{t=0}^{\tau_{B^c_L}-1}  \mathbbm{1} \big\{ X(t) \in B_{(1-\delta)L} \big\} \Big ) \\
\label{eq:violation}
& \geq   \, \, D  \, G_{B_{L}^c}(0,0) \, \, p \, \,  (1-\delta) \, L ,
\end{align}
where for the first inequality we used (\ref{eq:regularity}) and 
for the second inequality we used conditional expectation and
we let  $p = P_0 \big ( X(t) \neq 0 \, \, \forall t >0 \big ) >0$ be the probability that the random walk 
does not return to its starting vertex, which is positive since the random walk has a positive speed
and is then transient.
Choose now $\delta \in (0,1)$ small enough such that, for any $L$ large enough,
\begin{align*}
\sum\limits_{y \in B_{ (1-\delta) \, L }} G_{B_{L}^c}(y,0)\Big ( Q(y,B_L ) \, - \, \mu \Big ) 
& \geq   \, \, D  \, G_{B_{L}^c}(0,0) \, \, p \, \,  (1-\delta) \, L \\
& > C_1\,  \delta \,  L.
\end{align*}
Since the previous condition holds for any $L$ large enough, we deduce that
an infinite increasing sequence $\{L_n\}_{n \in \mathbb{N}}$ satisfying  (\ref{eq:constraintQ}) 
cannot exist when $\mu \leq \frac{\lambda}{1+\lambda}\leq1$, obtaining the desired contradiction.
\end{proof}

\section*{Acknowledgements}
The author thanks Elisabetta Candellero and Alexandre Stauffer for very interesting discussions
and Shirshendu Ganguly and Leonardo Rolla for  useful comments.

\end{document}